\documentclass[11pt]{amsart}
\usepackage{amsfonts, amsthm, epsfig, amsmath, amssymb, color}
\usepackage{textcomp}
\usepackage{paralist}

\usepackage[english]{babel}

\textwidth 16.5cm \textheight 23cm
\oddsidemargin 0mm
\evensidemargin -4.5mm
\topmargin -10mm
\parindent 0.5cm


\usepackage{color}

\def\nn{\nonumber}

\def\lp{L\'{e}vy process}

\newcommand{\mcl}{\mathcal{L}}
\newcommand{\al}{\alpha}
\newcommand{\gam}{\gamma}


\newcommand{\ve}{\varepsilon}
\newcommand{\eps}{\varepsilon}

\renewcommand{\P}{\mathsf P}
\newcommand{\E}{\mathsf E}

\newcommand{\R}{\mathbb R}
\renewcommand{\Re}{\mathbb R}
\newcommand{\df}{ d}
\newcommand{\prt}{\partial}
\newcommand{\mc}[1]{\mathcal {#1}}
\newcommand{\dif}{{\mathrm D}}

\newcommand{\pr}{\mathsf P}

\newcommand{\be}{\begin{equation}}
\newcommand{\ee}{\end{equation}}
\newcommand{\ba}{\begin{aligned}}
\newcommand{\ea}{\end{aligned}}

\def\lp{L\'{e}vy process}
\def\lm{L\'{e}vy measure}

\selectlanguage{english}
\numberwithin{equation}{section}

\theoremstyle{plain}
\newtheorem{thm}{Theorem}[section]
\newtheorem{prop}{Proposition}[section]

\newtheorem{lem}{Lemma}[section]
\theoremstyle{definition}

\newtheorem{ex}{Example}[section]
\theoremstyle{remark}
\newtheorem{rem}{Remark}[section]

\begin{document}

\title[Uniform LAN of locally stable L\'{e}vy process]{
Uniform LAN property of locally stable L\'{e}vy process observed at high frequency
}

\author{D. O. Ivanenko}
\address{Kyiv National Taras Shevchenko  University, Volodymyrska, 64, Kyiv, 01033,
             Ukraine} \email{ida@univ.kiev.ua}

\author{A. M. Kulik}
\address{Institute of Mathematics, Ukrai\-ni\-an National Academy of Sciences,
01601 Tereshchenkivska, 3, Kyiv, Ukraine}
\email{kulik.alex.m@gmail.com}

\author{H. Masuda}
\address{{1) Faculty of Mathematics, Kyushu University, 744 Motooka, Nishi-ku, Fukuoka 819-0395, Japan};
{2) CREST, JST, 744 Motooka, Nishi-ku, Fukuoka 819-0395, Japan}}
\email{hiroki@math.kyushu-u.ac.jp}
\thanks{Partly supported by JSPS KAKENHI Grant Number 26400204 (HM)}

\subjclass[2000]{Primary ; Secondary}
\date{Revised \today\ (First version November 6, 2014)}
\keywords{High-frequency sampling, LAN, Likelihood function, L\'{e}vy process, Regular statistical experiment.}

\begin{abstract}
Suppose we have a high-frequency sample from the {\lp} of the form $X_t^\theta=\beta t+\gamma Z_t+U_t$,
where $Z$ is a possibly asymmetric locally $\al$-stable {\lp}, and $U$ is a nuisance {\lp} less active than $Z$.
We prove the LAN property about the explicit parameter $\theta=(\beta,\gam)$ under very mild conditions 
without specific form of the {\lm} of $Z$, thereby generalizing the LAN result of A\"{\i}t-Sahalia and Jacod \cite{AJ07}. 
In particular, it is clarified that a non-diagonal norming may be necessary in the truly asymmetric case. 
Due to the special nature of the local $\al$-stable property, the asymptotic Fisher information matrix takes a clean-cut form.
\end{abstract}

\maketitle

\section{Introduction}

Ever since Le Cam's pioneering work \cite{LeC60},
local asymptotics of likelihood random fields has been playing a crucial role in the theory of asymptotic inference.
Specifically, the celebrated local asymptotic normality property (LAN) introduced by Le Cam
has been a longstanding prominent concept, based on which we can deduce, among others,
asymptotic optimality criteria for estimation and testing hypothesis.
Not only for the classical i.i.d. models,
there are many existing LAN results for several kinds of statistical experiments of dependent data,
including ergodic times-series models, homoscedastic models, and ergodic stochastic processes, to mention juts a few.
One can consult \cite{LeCYan00} and the references therein for a systematic account of the LAN together with many related topics.

It is a common knowledge that verification of the LAN for a stochastic processes with no closed-form likelihood
is generally a difficult matter. In case of diffusions under high-frequency,
Gobet \cite{Gobe} and \cite{Gob02} successfully derived the LAN and LAMN by means of the Malliavin calculus.
There the structures of the limit experiments turned out to be simple enough (normal or mixed normal).
One of theoretical merits of high-frequency sampling is that it enables us to take into account
a small-time approximation of the underlying model, based on which we may derive an implementable and
asymptotically efficient estimator. This has been achieved for the diffusion models,
see Kessler \cite{Kes97} and Genon-Catalot and Jacod \cite{GJ93}.
However, to say nothing of L\'{e}vy driven non-linear stochastic differential equations,
much less has been known about the explicit LAN result for {\lp es} observed at high frequency
where the transition probability is hardly available in a closed form.
We refer to \cite{Mas_LM} for several explicit case studies about LAN result
and related statistical-estimation problems concerning {\lp es} observed at high frequency.
Especially when the underlying {\lp} or the most active part of the process is symmetric $\al$-stable,
the explicit LAN result has been proved in \cite{AJ07} and \cite{Mas09}. See also \cite{AJ08} for
the precise asymptotic behavior of the Fisher-information matrix for the same model setting as in \cite{AJ07}.

We will consider the {\lp} $X^{\theta}$ described by $X_t^\theta=\beta t+\gamma Z_t+U_t$,
where $Z$ is a \emph{locally $\al$-stable} {\lp} and where
$U$ is a {\lp} which is independent of $Z$ and \emph{less active} than $Z$,
the latter being regarded as a nuisance process; we specify them below.
The objective of this paper is to derive the LAN about the explicit parameter $\theta=(\beta,\gam)$ under very mild conditions,
when $X^\theta$ is observed at high-frequency.
Our model setting is quite broad to cover many specific examples of infinite-activity pure-jump {\lp es},
and in particular generalizes the LAN result of \cite{AJ07},
for the locally $\al$-stable property only requires that the {\lm} behaves like that of the $\al$-stable distribution only near the origin,
hence is much weaker requirement than the genuine $\al$-stable case.
It turns out that the special nature of the locally $\al$-stable character leads to a clean-cut limit experiments
described in terms of the $\al$-stable density. Owing to high-frequency sampling,
the method we propose is highly non-sensitive with respect to the nuisance process $U$,
and allows us to formulate the LAN property uniformly with respect to a class of nuisance processes;
this explains the term ``uniform'' in the title of the paper.

Our proof of the LAN property is based on two principal ingredients. One of them is the classical $L_2$-regularity technique, which dates back to Le Cam. Another important ingredient is the Malliavin calculus-based integral representation  for the derivative of the log-likelihood function, which we use in order to derive the $L_p$-bounds for this derivative. This method of proof is mainly based on the ideas developed in \cite{Kul_ivanenko_2}, \cite{Kul_ivanenko} for the model where $X^\theta$ is a solution to a L\'evy-driven SDE observed with a fixed frequency, but in the high-frequency case we encounter new challenge to design the particular version of the Malliavin calculus in a way which provide  asymptotically precise $L_p$-bounds. We mention an independent recent paper \cite{CG15}, where similar tools are developed for the same purposes. Our way to obtain the asymptotically precise $L_p$-bounds and its relation to that developed in \cite{CG15} is discussed in details in Section \ref{s3} below.

It is natural to ask for extending our LAN result for stochastic differential equation driven by a locally $\al$-stable $Z$.
This extension is far-reaching and may involve the notion of
the locally asymptotically mixed normality property (LAMN) introduced by Jeganathan \cite{Jeg82},
which covers cases of random asymptotic Fisher information matrix. This is particularly relevant to
heteroscedastic processes observed at $n$ distinct time points over a fixed time domain.
In such cases it is typical that randomness of the covariance structure is not averaged out in the limit experiments.
See \cite{Doh87}, \cite{GJ94} and \cite{Gobe} for the case of diffusion processes.
The LA(M)N property of a solution to a SDE driven by a locally $\al$-stable {\lp}
under high-frequency sampling is one of currently-projected topics.
To the best of our current knowledge,
the papers \cite{CG15} and \cite{Mai14} are the only existing result in this direction.
This will involve more technicalities than the present L\'{e}vy-process setting, and will be investigated in a subsequent paper.

\medskip

This paper is organized as follows. In Section \ref{s1} we describe the model, introduce the assumptions, and formulate the main results of the paper. Section \ref{s2} contains the main part of the proof, which is based on the Le Cam's  $L_2$-regularity technique and relies on $L_p$-bounds for  the derivative of the log-likelihood function. These $L_{p}$-bounds are proved in Section \ref{s3} by means of a specially designed version of the Malliavin calculus.


\section{Main results}\label{s1}

Let $X^\theta$ be a L\'evy process of the form
\be\label{X}
X_t^\theta=\beta t+\gamma Z_t+U_t, \quad t\geq 0.
\ee
Here $Z$ and $U$ are independent L\'evy processes defined on a probability space $(\Omega,\mathcal{F},\P)$,
and $\theta=(\beta,\gamma)^\top\in \Re^2$ is an unknown parameter subject to a statistical estimation.
We assume $Z$ to be such that in its L\'evy-Khintchine representation
$$
\E e^{i\lambda Z_t}=e^{t\psi(\lambda)},
$$
the characteristic exponent $\psi$ has the form
\be\label{ch_exp}
\psi(\lambda)=\int_{\Re}\left(e^{i\lambda u}-1-i\lambda u1_{|u|\leq 1}\right)\, \mu(du).
\ee
That is, $Z$ does not contain the diffusion term, the truncation function equals  $u1_{|u|\leq 1}$, and no additional drift term is involved.
Throughout this paper, the L\'evy measure $\mu$ is assumed to satisfy the following conditions:
\begin{itemize}
  \item[\textbf{H1.}] $\mu(du)=m(u)du$, and for some $\alpha\in (0,2)$,
  $$
  m(u)\sim \left\{
             \begin{array}{ll}
                C_+|u|^{-\alpha-1}, & u\to 0+, \\
                 C_-|u|^{-\alpha-1}, & u\to 0-,
             \end{array}
           \right.\quad C_-+C_+>0.
  $$
  \item[\textbf{H2.}] $m\in C^1(\Re\setminus\{0\})$, and there exists a constant $u_0>0$ such that the function
  $$
  \tau(u):={|u m'(u)|\over m(u)}
  $$
  is bounded on the set $\{|u|\leq u_0\}$ and satisfies
  $$
  \int_{|u|>u_0}\tau^{2+\delta}(u)\mu(du)<\infty
  $$
  for some $\delta>0$.
  \end{itemize}

Recall that for an $\alpha$-stable process its L\'evy measure has the density
\be\label{a_sta}
m_{\alpha, C_{\pm}}(u):= \left\{ \begin{array}{ll}
                C_+|u|^{-\alpha-1}, & u> 0, \\
                 C_-|u|^{-\alpha-1}, & u<0.
             \end{array} \right.
\ee
Hence \textbf{H1} requires that locally near the origin the L\'evy measure for $Z$  behaves similar to that for an $\alpha$-stable process; that is why we call $Z$ \emph{locally $\alpha$-stable}.
The constant $(C_{+}-C_{-})/(C_{+}+C_{-})\in[-1,1]$ determines the signed degree of skewness, see \cite{Zolot} for details.

Note that \textbf{H2} does not require $\tau(u)$ to be bounded for ``large'' $u$; this includes into the class of admissible $Z$ a wide range of ``stable-like'' L\'evy processes with
$$
m(u)=f(u)m_{\alpha, C_{\pm}}(u),
$$
where $f(u)\to 1, |u|\to 0$.
\begin{ex}[Tempered $\alpha$-stable process]
For either $f(u)=e^{1-\sqrt{1+u^2}}$, $f(u)=e^{-u^2}$,
or $f(u)=e^{-|u|}$, conditions \textbf{H1}, \textbf{H2} hold true, although $\tau(u)$ fails to be bounded.
  \end{ex}

\begin{ex}[Smoothly damped $\alpha$-stable process]
Let $m(u)=f(u)|u|^{-\alpha-1}1_{[-u_1, u_1]}(u)$, $u_{1}>0$, where
$f$ is continuous in $\R$, $f>0$ for $u\in[-u_1, u_1]$ with $f(u)\to 1$ as $u\to 0$,
and $f$ smoothly vanishes outside the interval $[-u_1, u_1]$
in such a way that $u\mapsto |u||f'(u)|/f(u)$ is locally bounded and moreover
$u\mapsto \{|u||f'(u)|/f(u)\}^{2+\delta}m(u)$ is $du$-integrable over the set $\{|u|\ge u_{0}\}$
for some $\delta>0$ and $u_{0}>0$. Then conditions \textbf{H1}, \textbf{H2} hold true;
note that $\tau(u)\le|u||f'(u)|/f(u)+\alpha+1$.
One particular example of such $f$ is of the form
$$f(u)=Ce^{-1/(u+u_1)-1/(u_1-u)}1_{[-u_1, u_1]}(u).$$
\end{ex}

We are focused on the following setting:
\begin{itemize}
  \item the process $X^\theta$ is \emph{discretely observed}, i.e. the $n$-th sample contains its values at the first $n$ points
$\{t_{k,n}=kh_n, k=1, \dots, n\}$ of the uniform partition of the time axis with the partition interval $h_n$;
  \item $h_n\to 0$ as $n\to\infty$, i.e. the discrete observations of $X^\theta$ have \emph{high frequency}.
\end{itemize}
We note that the terminal sampling time $nh_{n}$ may or may not tend to infinity as $n\to\infty$.

In what follows an open set $\Theta\in \Re^2$ denotes the set of possible values of the unknown parameter $\theta$; we assume that $\Theta\subset\Re\times (0,\infty),$ i.e. parameter $\gamma$ takes only positive values. Denote $\pr_n^\theta$ the law of the sample
$$
\Big\{X_{t_{k,n}}^\theta, k=1, \dots, n\Big\}
$$
in $(\Re^n, \mathcal{B}(\Re^n))$, and write
\be\nn
\mathcal{E}_n=\Big\{\R^n,\,\mathcal{B}(\Re^n),\,(\pr_{n}^\theta, \theta\in
\Theta)\Big\} \ee
for a statistical model based on this sample.

Under our conditions on the process $Z$, the law $\pr_n^\theta$ is absolutely
continuous with respect to Lebesgue measure (see Section \ref{s2}),
i.e. the model $\mathcal{E}_n$ possesses the \emph{likelihood function} $$
L_n\left(\theta;x_1,\dots,x_n\right)={\pr_{n}^\theta(dx_1 \dots dx_n)\over
\df x_{1} \dots \df x_n}$$
Denote by
$$Z_n(\theta_0,\theta; x_1,\dots,x_n)=
\frac{L_n\left(\theta;x_1,\dots,x_n\right)}{L_n\left(\theta_0;x_1,\dots,x_n\right)}$$
  the \emph{likelihood ratio} of $\pr_n^\theta$ with respect to
$\pr_n^{\theta_0}$ with the convention (anything)$/0=\infty$.

Our goal is to establish the LAN property for the sequence of statistical models $\mathcal{E}_n, n\geq 1$, specified above. Recall that
the LAN property is said to hold at a point $\theta_0\in\Theta$ with the matrix rate $\{ r(n)=
r(n,\theta_0),\ n\ge 1\}$ and the covariance matrix $\Sigma(\theta_0)$,
if for every $v$ the sampled likelihood ratio
$$Z_{n}(\theta_0, \theta_0+
r(n)v)=Z_n(\theta_0,\theta_0+r(n)v;
X^{\theta_0}_{t_{1,n}},\dots,X^{\theta_0}_{t_{n,n}})
$$
possesses representation under
\be\label{rep}
Z_{n}(\theta_0, \theta_0+
r(n)v)=\exp\left\{v^\top\Delta_n(\theta_0)-\frac{1}{2}v^\top\Sigma(\theta_0)
v+\Psi_n(v,\theta_0)\right\}
\ee
with
 \be\label{LT}
\Delta_n(\theta_0)\Rightarrow \mc{N}(0, \Sigma(\theta_0)), \quad
 n\rightarrow\infty \ee
and \be\label{Psi}
\Psi_n(v,\theta_0)\stackrel{\P}{\longrightarrow}0,\quad n\rightarrow\infty \ee
along $\pr_n^{\theta_0}$.

Put
\be\label{c_t}
c_t=t\int_{t^{1/\alpha}<|u|\leq 1} u\,\mu(du),
\ee
which is identically zero if $\mu$ is symmetric
and denote by $Z^{\alpha, C_\pm}$ the $\alpha$-stable process whose characteristic exponent has the form (\ref{ch_exp}) with the L\'evy  measure (\ref{a_sta}), where $C_+, C_-$ are given by the condition \textbf{H1}. Finally, denote by $\phi_{\alpha, C_\pm}$ the distribution density of $Z^{\alpha, C_\pm}_1$ (this density exists, see \cite{Zolot} or  Proposition \ref{prop1} below).

Now we are able to formulate our main result.

\begin{thm}\label{thm1}
Let $X^{\theta}$ be given by \eqref{X} and assume that $Z$ satisfies \textbf{H1} and \textbf{H2}, that
 \be\label{neglig}
 t^{-1/\alpha}U_t\to 0, \quad t\to 0,
 \ee
 in probability, and that
 $$
n^{-1/2}h_n^{1/\alpha-1}\to 0
 $$
 (automatic if $\al\in(0,1]$ since we are supposing that $h_{n}\to 0$).
 Then  the LAN property holds true at every point $\theta_0\in \Theta$ with
\be\label{answer}
 r(n)= n^{-1/2}\left(
        \begin{array}{cc}
         h_n^{1/\alpha-1} & c_{h_n}h_n^{-1} \\
          0 & 1 \\
        \end{array}
      \right), \quad \Sigma(\theta)=
          \left(\begin{array}{cc} \Sigma_{11}(\theta)
         & 0 \\
          0 & \Sigma_{22}(\theta) \\
        \end{array}
      \right),
\ee
where
\begin{align}
\Sigma_{11}(\theta_0)&=\gamma_0^{-2}\int_{\Re}\left({\phi'_{\alpha, C_\pm}(x)\over \phi_{\alpha, C_\pm}(x)}\right)^2\phi_{\alpha, C_\pm}(x)dx,
\nn\\
\Sigma_{22}(\theta_0)&=\gamma_0^{-2}\int_{\Re}\left(1+{x\phi'_{\alpha, C_\pm}(x)\over \phi_{\alpha, C_\pm}(x)}\right)^2\phi_{\alpha, C_\pm}(x)dx.
\nonumber
\end{align}
\end{thm}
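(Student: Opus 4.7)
My plan is to verify the LAN property by means of Le Cam's $L_{2}$-regularity (DQM) technique, specialised to the triangular array at hand. Since $X^\theta$ has stationary, independent increments, the sample likelihood factorises as $L_n(\theta;x_1,\ldots,x_n)=\prod_{k=1}^n p_n(\theta;\,x_k-x_{k-1})$ (with $x_0=0$), where $p_n(\theta;\cdot)$ denotes the density of a single $h_n$-increment $\Delta_k X:=X^\theta_{t_{k,n}}-X^\theta_{t_{k-1,n}}$. Accordingly the goal reduces to showing that the one-observation family $\{p_n(\theta_0+r(n)v;\cdot)\}_{v\in\R^2}$ is differentiable in quadratic mean at $v=0$ and that the corresponding centred score satisfies a triangular-array CLT; the standard i.i.d.\ Le Cam machinery then yields \eqref{rep}, \eqref{LT} and \eqref{Psi}.

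The matrix rate $r(n)$ is forced by the small-time scaling imposed by \textbf{H1}. By Proposition~\ref{prop1} combined with the nuisance negligibility \eqref{neglig}, the rescaled increment
\[
\xi_n:=(\gamma_0 h_n^{1/\alpha})^{-1}\bigl(\Delta_k X^{\theta_0}-\beta_0 h_n+\gamma_0 c_{h_n}\bigr)
\]
converges in law to the $\alpha$-stable variable with density $\phi_{\alpha,C_\pm}$, and $p_n(\theta_0;\cdot)$ is well approximated by the affine image of $\phi_{\alpha,C_\pm}$ at scale $\gamma_0 h_n^{1/\alpha}$. Formal differentiation of this approximation reveals that $\partial_\beta\log p_n$ is of order $h_n^{1-1/\alpha}$ while $\partial_\gamma\log p_n$ is of unit order but carries an additional term of order $c_{h_n}/h_n^{1/\alpha}$ that mixes the two directions whenever $Z$ is asymmetric. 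The entries of $r(n)$ are chosen precisely to undo both effects: the diagonal rates normalise the two partial scores to unit order, and the off-diagonal $c_{h_n}h_n^{-1}$ cancels the asymmetry-driven coupling. Direct computation then identifies the two normalised score components as $-\gamma_0^{-1}\phi'_{\alpha,C_\pm}(\xi_n)/\phi_{\alpha,C_\pm}(\xi_n)$ and $-\gamma_0^{-1}(1+\xi_n\phi'_{\alpha,C_\pm}(\xi_n)/\phi_{\alpha,C_\pm}(\xi_n))$, whose second moments under $\phi_{\alpha,C_\pm}$ are exactly $\Sigma_{11}(\theta_0)$ and $\Sigma_{22}(\theta_0)$ in \eqref{answer}. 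The side hypothesis $n^{-1/2}h_n^{1/\alpha-1}\to 0$ is what forces the local reparametrization $\theta_0\mapsto\theta_0+r(n)v$ to shrink to $\theta_0$, as DQM requires.

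With the limits identified, the CLT for $\Delta_n(\theta_0)=\sum_{k=1}^n r(n)^\top\nabla_\theta\log p_n(\theta_0;\Delta_k X)$ reduces to a triangular-array Lindeberg--Lyapunov statement, while both the convergence of the scaled Fisher information $n\,r(n)^\top\E_{\theta_0}[\nabla_\theta\log p_n\,\nabla_\theta\log p_n^\top]\,r(n)\to\Sigma(\theta_0)$ and the negligibility of the DQM remainder follow once one has adequate $L_p$-control on $\nabla_\theta\log p_n(\theta;\cdot)$, uniform in $\theta$ near $\theta_0$. Supplying those $L_p$-bounds with the correct $h_n$-dependence is the main obstacle and is the content of Section~\ref{s3}. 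The principal difficulty is that a naive Malliavin calculus on $Z$ (based on perturbing only the jumps above a fixed threshold) yields integration-by-parts weights whose $L_p$-norms deteriorate as $h_n\to 0$; one must design a calculus whose perturbation direction is tailored to the full stable-like small-jump structure of $Z$, so as to keep the weights bounded at the correct rate. A secondary point is to verify that the nuisance $U$ does not contaminate these bounds, which is handled by conditioning on $U$ and using \eqref{neglig} as a perturbation estimate.
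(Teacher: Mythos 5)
Your proposal is correct and follows essentially the same route as the paper: reduction to an i.i.d.\ triangular array of increments, Le Cam's $L_2$-regularity conditions verified via the small-time stable approximation of the increment density (which dictates $r(n)$, including the off-diagonal asymmetry correction $c_{h_n}h_n^{-1}$), and the Lindeberg/uniform-integrability and remainder estimates supplied by the $L_p$-bounds on the score proved in Section~\ref{s3} by the modified Malliavin weight, with the nuisance $U$ eliminated through the convolution/conditional-expectation structure.
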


\begin{rem}
Recall the definition of the Blumenthal-Getoor activity index of a L\'evy process $Y$ with the L\'evy measure $\mu_Y$:
\begin{equation}
\al_{Y}:=\inf\bigg\{q\ge 0:~\int_{|u|\le 1}|u|^{q}\mu_{Y}(du)<\infty\bigg\}.
\nonumber
\end{equation}
Then it is sufficient for the condition \eqref{neglig} that $\al_{U}<\al$ (see, e.g., p.362 of \cite{Sat99}); note that $\al_Z=\al$.
In this paper we are assuming that the activity index $\al$ is known.
This might seem disappointing, however, as it was clarified in \cite{AJ08} and \cite{Mas09},
 if one attempts to make joint maximum-likelihood estimation of $\al$ and the scale parameter $\gam$, one may confront the degeneracy of the asymptotic Fisher information matrix.
This degeneracy is inevitable, and how to cope with it is beyond the scope of this paper.
\label{rem_BGi1}
\end{rem}

\begin{rem}
In view of the standard theory \cite{IKh} concerning asymptotically efficient estimation of a LAN model,
Theorem \ref{thm1} suggests to seek an estimator $\hat{\theta}=(\hat{\beta}_{n}, \hat{\gam}_{n})^{\top}$ such that
\begin{equation}
r(n)^{-1}(\hat{\theta}_{n}-\theta_{0})
=\left(
        \begin{array}{c}
         \sqrt{n}h_n^{1-1/\alpha}(\hat{\beta}_{n}-\beta_{0}) - h_{n}^{-1/\al}c_{h_{n}}\cdot\sqrt{n}(\hat{\gam}_{n}-\gam_{0}) \\
         \sqrt{n}(\hat{\gam}_{n}-\gam_{0}) \\
        \end{array}
      \right)
\nonumber
\end{equation}
weakly tends to the centered normal distribution with the asymptotic covariance matrix $\Sigma(\theta_{0})$.
Observe that, when $\mu$ is asymmetric, the factor
\begin{equation}
h_n^{-1/\al}c_{h_{n}}
=h_{n}^{1-1/\al}\int_{h_n^{1/\al}<|u|\le 1}u\mu(du)
\nonumber
\end{equation}
may or may not vanish, or even may diverge, implying that the asymmetry essentially and non-trivially affect
estimation of the drift parameter $\beta$.
As a matter of fact, the necessity of non-diagonal norming seems to be non-standard in the literature:
typically, it is enough to take
$r(n)=\mathrm{diag}\{(\E|\frac{\partial}{\partial\theta_{j}}\log L_{n}(\theta;X^{\theta_0}_{t_{1,n}},\dots,X^{\theta_0}_{t_{n,n}})|^{2})_{j}\}$
whenever exists; for instance, the monograph \cite{BasSco83} is devoted to the diagonal norming.
We refer to \cite{Fah88} and \cite{Swe92} for some technical refinements of asymptotic inference by using a non-diagonal norming.
Nevertheless, we note that since $r(n)$ of \eqref{answer} is invertible,
we have no trouble in construction of an asymptotically confidence region of an asymptotically normally distributed estimator
converging at rate $r(n)$.
\end{rem}

\medskip

Under the condition (\ref{neglig}) the process $U$ is interpreted as a ``nuisance noise'',
in the sense that $U$ is less active than the ``principal'' part $Z$, as was mentioned in Remark \ref{rem_BGi1}.
A natural question is whether or not it is possible to extend Theorem \ref{thm1}
so as to make the LAN property valid not only for each single $U$,
but also \emph{uniformly} over some ``nuisance class'' $\mathfrak{U}$ of $U$.
Our method of proof of Theorem \ref{thm1} is strong enough to provide the following
\emph{uniform} LAN property in such an extended setting.

\begin{thm}\label{thm2}
Let $\mathfrak{U}$ be a class of L\'evy processes such that condition (\ref{neglig}) holds true uniformly over $U\in \mathfrak{U}$. If in addition $Z$ and $h_n$ satisfy conditions of Theorem \ref{thm1}, then for every $U\in \mathfrak{U}$ and $\theta_{0}\in\Theta$, the likelihood ratio for the discretely observed process (\ref{X}) admits a representation (\ref{rep}) with $r(n), \Sigma(\theta_0)$ specified in (\ref{answer}), and relations (\ref{LT}), (\ref{Psi})  hold true uniformly over $U\in \mathfrak{U}$.
\end{thm}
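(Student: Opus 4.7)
The plan is to inspect the proof of Theorem \ref{thm1} and verify that every step involving the nuisance process $U$ can be made uniform once assumption \eqref{neglig} is upgraded to uniformity over $\mathfrak{U}$. The nuisance process enters the analysis only through the $h_n$-increments $\Delta U_k := U_{t_{k,n}}-U_{t_{k-1,n}}$, which appear additively in $\Delta X_k := X^{\theta_0}_{t_{k,n}}-X^{\theta_0}_{t_{k-1,n}} = \beta_0 h_n+\gamma_0\Delta Z_k+\Delta U_k$. Under the natural stable rescaling by $h_n^{-1/\alpha}$, the term $h_n^{-1/\alpha}\Delta U_k$ tends to zero in probability uniformly in $U\in\mathfrak{U}$. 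The goal therefore reduces to showing that the score function and its derivatives, viewed as functionals of the rescaled increment, are continuous enough at the ``$U\equiv 0$'' reference value so that uniform negligibility of the $U$-perturbation transfers into uniform convergence of the relevant quantities.

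First I would revisit the $L_p$-bounds for the score derived via the designed Malliavin calculus of Section \ref{s3}. A careful reading of the construction shows that the constants appearing there depend on $U$ only through truncated moments of $h_n^{-1/\alpha}\Delta U_k$ and can be bounded uniformly in $U\in\mathfrak{U}$ under the uniform version of \eqref{neglig}. This uniform bound is precisely the ingredient which Le Cam's $L_2$-regularity technique from Section \ref{s2} needs in order to produce the representation \eqref{rep} and to pin down the remainder $\Psi_n(v,\theta_0)$. Crucially, the limiting Fisher information matrix $\Sigma(\theta_0)$ is built solely from the $\alpha$-stable density $\phi_{\alpha,C_\pm}$ and does not depend on $U$ at all, reflecting the fact that the locally $\alpha$-stable component dominates in the small-time limit.

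For the CLT \eqref{LT} I would apply a Lindeberg-type argument to the triangular array of rescaled score contributions. The Gaussian limit is determined by the limit of the normalized sum of conditional variances together with a uniform negligibility of each summand, and both ingredients are controlled through the uniform $L_p$-bounds above. Passage from pointwise to uniform weak convergence in $U$ is then effected by showing uniform-in-$U$ convergence of the characteristic functions of $\Delta_n(\theta_0)$, which itself reduces to the uniform moment estimates already secured. The main obstacle, as in Theorem \ref{thm1} itself, is the uniform $L_p$-control of the score derivative: once this is achieved with constants independent of $U\in\mathfrak{U}$, the remainder $\Psi_n(v,\theta_0)$ vanishes in probability uniformly over $\mathfrak{U}$, and the three pieces combine to give the uniform LAN asserted in Theorem \ref{thm2}.
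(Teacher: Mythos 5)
Your overall strategy---rerunning the proof of Theorem \ref{thm1} and checking that every $U$-dependent ingredient survives once \eqref{neglig} is assumed uniformly over $\mathfrak{U}$---is exactly the route the paper takes (formally, the paper expresses the uniformity by fixing an arbitrary sequence $U^n$ with $h_n^{-1/\alpha}U^n_{h_n}\to 0$ in probability and re-verifying \textbf{A1}--\textbf{A4} for the modified model $X^{\theta,n}_t=\beta t+\gamma Z_t+U^n_t$). However, the two load-bearing claims in your outline are asserted rather than proved, and as stated they are not quite right. First, the uniformity of the $L_p$-bound for the score is not obtained by tracking how ``the constants in the Malliavin construction depend on $U$ through truncated moments of $h_n^{-1/\alpha}\Delta U_k$''; you give no argument for this, and it is not how the construction works. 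The paper's mechanism is cleaner and stronger: the modified Malliavin weight $\Xi^{\theta}_{t}$ of \eqref{modif_weight} is a functional of $Z$ alone---$U$ does not enter its formula at all---and since $g_{h_n}(\theta;X^{\theta}_{h_n})=\E\big[\Xi^{\theta}_{h_n}\,\big|\,X^{\theta}_{h_n}\big]$ by \eqref{modif_g}, Jensen's inequality gives \eqref{lp_bound_ext} with a bound entirely free of $U$. Without this observation (or an actual proof of your truncated-moment claim) the uniform $L_2$-regularity step remains a gap.

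Second, your claim that the uniform CLT for $\Delta_n(\theta_0)$ ``reduces to the uniform moment estimates already secured'' is too strong: the moment bounds only deliver uniform integrability and the Lindeberg-type condition \textbf{A3}; they do not identify the Gaussian limit. What is additionally needed---and what the paper verifies---is (i) that the rescaled increments $\xi^{\theta,n}_{1,n}\stackrel{d}{=}\zeta_{\alpha,h_n}+\gamma^{-1}h_n^{-1/\alpha}U^n_{h_n}$ converge weakly to $Z_1^{\alpha,C_\pm}$ uniformly over $\mathfrak{U}$ (Proposition \ref{prop1}(1) combined with the uniform negligibility), and (ii) that the local uniform convergences \eqref{pt_conv} and \eqref{gt_conv} of $f_t$, $f^{(1)}_t$, $f^{(2)}_t$ to $\phi_{\alpha,C_\pm}$, $\phi'_{\alpha,C_\pm}$, $x\phi'_{\alpha,C_\pm}$ hold uniformly in $U\in\mathfrak{U}$ (straightforward, since these are convolutions against $\nu_t$ and $\nu_t\Rightarrow\delta_0$ uniformly). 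Together these yield \textbf{A2}, and also \textbf{A4}, which your outline does not address and which requires the same two inputs plus the H\"older/moment bound. Your remark about ``continuity at the $U\equiv 0$ reference value'' gestures at (ii) but does not establish it; making (i)--(ii) explicit and replacing the truncated-moment claim by the Jensen argument above closes the proof in essentially the paper's way, with the $U$-free limit $\Sigma(\theta_0)$ coming out exactly as you anticipated.
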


As it was explained in \cite{AJ07}, the uniform negligibility of $U$ would play an important role
for purposes of \emph{semiparametric statistical (adaptive) estimation} of $\theta$:
in p.358 of \cite{AJ07}, the authors introduce a class of possible nuisance noise distribution $\mcl(U_{1})$,
over which one can precisely formulate an asymptotically uniformly efficient estimation of $\theta$;
this in turn leads to the notion of asymptotically uniformly efficient estimator of $\theta$.
As a matter of fact, it would be possible to precisely state a uniform-in-$U$ version of the Haj\'{e}k-Le Cam convolution theorem,
which effectively clarifies the uniform asymptotic lower bound of an expected loss of any regular estimator with $r(n)$-rate of convergence;
among others, see Section 2.3 of \cite{BasSco83} and Section II.11 of \cite{IKh} for details.
How to construct an asymptotically efficient estimator would be several things, to be reported elsewhere.


\section{Proofs of Theorem \ref{thm1} and Theorem \ref{thm2}}\label{s2}

In this section we prove Theorem \ref{thm1} and outline the proof of Theorem \ref{thm2}. The key ingredient in these proofs would the $L_p$-bound for the derivative of the log-likelihood (Proposition \ref{prop2}), which we discuss in details and prove  separately in Section \ref{s3} below.

\subsection{Proof of Theorem \ref{thm1}: an outline and preliminaries}\label{s21}

Denote by $p_t(\theta; x, y)$ the transition probability density
for $X^\theta$, considered as a Markov process; in what follows we will prove that this density  exists.
Denote also
$$
g_t(\theta;x,y)={\nabla_\theta p_t(\theta;x,y)\over p_t(\theta;x,y)}=\nabla_\theta\log p_t(\theta; x,y),
\quad q_t(\theta;x,y)={\nabla_\theta p_t(\theta;x,y)\over 2\sqrt{p_t(\theta;x,y)}}=\nabla_\theta \sqrt{p_t(\theta;x,y)},
$$
assuming the derivatives to exist for $P_t(x, \cdot)$-a.a. $y$ for every fixed $x,t$.
Since $X^\theta$ has independent increments, we can write
$$
p_t(\theta; x, y)=p_t(\theta; y-x), \quad g_t(\theta; x, y)=g_t(\theta; y-x), \quad q_t(\theta; x, y)=q_t(\theta; y-x).
$$
Then the sampled {likelihood ratio} for the model can be written in the form
$$Z_{n}(\theta_0, \theta_0+
r(n)v)=\prod_{k=1}^n {p_{h_n}(\theta_0+r(n)v; X^{\theta_0}_{t_{k,n}}-X^{\theta_0}_{t_{k-1,n}})\over p_{h_n}(\theta_0; X^{\theta_0}_{t_{k,n}}-X^{\theta_0}_{t_{k-1,n}})}.
$$
Denote
$$
\eta_{k,n}^{\theta}=X^{\theta}_{t_{k,n}}-X^{\theta}_{t_{k-1,n}}, \quad 1\leq k\leq n,
$$
and observe that $p_{h_n}(\theta;\cdot)$ is the distribution density for $\eta_{kn}^\theta$. Hence the statistical model described above, after a re-sampling
 $$
(X_{t_{k,n}}^\theta)_{k=1}^n \mapsto (\eta_{k,n}^\theta)_{k=1}^n,
$$ actually is reduced to the one with a triangular array of independent observations. The LAN property for  triangular arrays of independent observations is well studied, e.g.  \cite{IKh}, Theorem II.3.1$'$,  Theorem II.6.1, and Remark II.6.2. In particular, in order to prove the required  LAN property at a point $\theta_0\in \Theta$ it would be enough for us to prove the following assertions.

\begin{itemize}
  \item[\textbf{A1}] For every $n$, the function
  $$
  \Theta\ni\theta\to \sqrt{p_{h_n}(\theta;\cdot)}\in L_2(\Re)
  $$
  is continuously differentiable; that is, the statistical experiment is \emph{regular.}
  \item[\textbf{A2}]$$\lim_{n\rightarrow\infty}\E \left|
\sum_{k=1}^n\left(r(n)^\top g_{h_n}\left(\theta_0;X^{\theta_0}_{kh_n}-X^{\theta_0}_{(k-1)h_{n}}\right)\right)^{\otimes2}
-\Sigma(\theta_0)\right|=0.$$
  \item[\textbf{A3}] For some  $\eps>0$,
$$ \lim_{n\rightarrow\infty} n\int_{\Re}\left|r(n)^\top g_{h_n}\left(\theta_0;y\right)\right|^{2+\eps}p_{h_n}\left(\theta_0;y\right)\, dy=0.
$$
  \item[\textbf{A4}]  For every $N>0$, $$
\lim_{n\rightarrow\infty}\sup_{|v|<N}
n\int_{\R}\left|r(n)^\top \left(q_{h_n}\left(\theta_0+
r(n)v;y\right)-
q_{h_n}\left(\theta_0;y\right)\right)\right|^2
dy=0.
$$
\end{itemize}

Before proving \textbf{A1}--\textbf{A4}, let us introduce some notation, formulate auxiliary statements, and make preliminary calculation.

Denote   $\tilde r(n)=n^{1/2}r(n)$. Denote also by $\tilde \Theta$ arbitrary (but fixed)  subset  of $\Theta$ such that
\be\nn
\inf\{\gamma: (\beta, \gamma)\in \tilde \Theta\}>0.
\ee
Consider the random variables
$$
\zeta_{\alpha,t}=t^{-1/\alpha}(Z_t+c_t).
$$
The following statement is proved in Appendix.
\begin{prop}\label{prop1}\begin{enumerate}
\item $\zeta_{\alpha,t}\Rightarrow Z^{\alpha,C_\pm}_1, t\to 0+$.
\item The variables $\zeta_{\alpha,t}$, $t>0$, an $Z^{\alpha,C_\pm}_1$ possess the distribution densities
$\phi_{\alpha, t}$, $t>0$, and $\phi_{\alpha, C_\pm}$, respectively.
These densities are infinitely differentiable, bounded together with their derivatives,
and for every $N>0$
\be\label{conv}
\sup_{|x|\leq N}|\phi_{\alpha,t}(x)-\phi_{\alpha, C_\pm}(x)|\to 0, \quad
\sup_{|x|\leq N}|\phi_{\alpha,t}'(x)-\phi_{\alpha, C_\pm}'(x)|\to 0, \quad t\to 0+.
\ee
\end{enumerate}
\end{prop}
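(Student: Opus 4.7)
The strategy is to reduce everything to the characteristic exponent of $\zeta_{\alpha,t}$. First, $Ee^{i\lambda \zeta_{\alpha,t}} = \exp\{i\lambda t^{-1/\alpha} c_t + t\psi(t^{-1/\alpha}\lambda)\}$; after changing variables $u = t^{1/\alpha}v$ inside $t\psi(t^{-1/\alpha}\lambda)$ and absorbing the correction $i\lambda t^{-1/\alpha} c_t$ into the truncation indicator (this is exactly what the definition \eqref{c_t} of $c_t$ is designed for), I obtain the clean representation
\begin{equation}
Ee^{i\lambda \zeta_{\alpha,t}} = e^{\Psi_t(\lambda)},\qquad \Psi_t(\lambda) := \int_\R \bigl(e^{i\lambda v} - 1 - i\lambda v 1_{|v|\le 1}\bigr) \tilde m_t(v)\, dv,
\nonumber
\end{equation}
where $\tilde m_t(v) := t^{1+1/\alpha} m(t^{1/\alpha} v)$. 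Hypothesis \textbf{H1} yields $\tilde m_t(v) \to m_{\alpha, C_\pm}(v)$ pointwise for $v \ne 0$, so one expects $\Psi_t(\lambda) \to \psi_{\alpha, C_\pm}(\lambda)$ as $t \to 0+$, where $\psi_{\alpha, C_\pm}$ denotes the characteristic exponent of $Z^{\alpha,C_\pm}_1$ given by \eqref{ch_exp} with $m_{\alpha,C_\pm}$ in place of $m$.

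To justify the limit by dominated convergence I split at $|v|=1$. For $|v|\le 1$ the integrand is $O(\lambda^2 v^2)$, and \textbf{H1} furnishes a uniform bound $\tilde m_t(v) \le C|v|^{-\alpha-1}$, so that $v^2 \tilde m_t(v)$ is dominated by $C|v|^{1-\alpha}$, integrable on $[-1,1]$. For $|v|>1$ the integrand is bounded, and the tail asymptotic $\mu(\{|u|>\epsilon\}) \sim (C_++C_-)\alpha^{-1}\epsilon^{-\alpha}$ (a consequence of \textbf{H1}) gives $\int_{|v|>1}\tilde m_t(v)\,dv = t\,\mu(\{|u|>t^{1/\alpha}\}) = O(1)$. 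Part (1) then follows from the L\'evy continuity theorem.

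For part (2) the crux is a uniform-in-$t$ stable-type tail bound on $|e^{\Psi_t(\lambda)}|$. To this end I plan to use the \emph{lower} estimate implicit in \textbf{H1}: there exist $c,u_0>0$ with $m(u)\ge c|u|^{-\alpha-1}$ on $0<|u|\le u_0$, which by scaling gives $\tilde m_t(v)\ge c|v|^{-\alpha-1}$ on $\{0<|v|\le u_0 t^{-1/\alpha}\}$. For $|\lambda|$ large enough that $1/|\lambda|\le u_0 t^{-1/\alpha}$, the substitution $w=\lambda v$ yields
\begin{equation}
-\mathrm{Re}\,\Psi_t(\lambda) \ge \int_{|v|\le 1/|\lambda|}(1-\cos(\lambda v))\tilde m_t(v)\,dv \ge c'|\lambda|^\alpha,
\nonumber
\end{equation}
so that $|e^{\Psi_t(\lambda)}|\le e^{-c'|\lambda|^\alpha}$ for large $|\lambda|$, uniformly in all sufficiently small $t$; the same inequality evidently holds for $|e^{\psi_{\alpha,C_\pm}(\lambda)}|$.

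Equipped with this tail, Fourier inversion $\phi_{\alpha,t}(x) = (2\pi)^{-1}\int e^{-i\lambda x}e^{\Psi_t(\lambda)}\,d\lambda$ produces a $C^\infty$ density all of whose derivatives are bounded uniformly in small $t$, since $\int|\lambda|^k e^{-c'|\lambda|^\alpha}\,d\lambda<\infty$ for every $k$. Combining the pointwise limit $e^{\Psi_t(\lambda)}\to e^{\psi_{\alpha,C_\pm}(\lambda)}$ with the integrable majorant $|\lambda|^k e^{-c'|\lambda|^\alpha}$, dominated convergence applied to the Fourier representation of $\phi_{\alpha,t}^{(k)}-\phi_{\alpha,C_\pm}^{(k)}$ gives convergence uniform in $x\in\R$, which in particular implies \eqref{conv}. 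I expect the main technical hurdle to be the uniform-in-$t$ character of the exponential bound in the third paragraph; once that is in hand, everything else is routine Fourier analysis.
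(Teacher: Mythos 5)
Your argument is essentially the paper's own: the same change of variables turning the characteristic exponent of $\zeta_{\alpha,t}$ into an integral against the rescaled density $\tilde m_t(v)=t^{1+1/\alpha}m(t^{1/\alpha}v)$, the L\'evy continuity theorem for part (1), and for part (2) the same uniform stable-type bound $|e^{\Psi_t(\lambda)}|\le c_1e^{-c_2|\lambda|^{\alpha}}$ obtained by keeping only frequencies $|v|\le 1/|\lambda|$ and using the lower bound from \textbf{H1}, followed by Fourier inversion. The only real deviation is the last step: you obtain \eqref{conv} by dominated convergence directly in the Fourier representation of $\phi_{\alpha,t}^{(k)}-\phi_{\alpha,C_\pm}^{(k)}$, whereas the paper deduces it from the weak convergence of part (1) combined with the uniform bounds on $\phi_{\alpha,t}'$ and $\phi_{\alpha,t}''$; your route is legitimate and in fact yields convergence uniform over all of $\Re$, not just on compacts.

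Two justifications should be tightened. (i) On $\{|v|>1\}$ there is in general no $t$-independent integrable majorant for $\tilde m_t$: any mass of $\mu$ sitting at distance of order one from the origin rescales to $|v|\asymp t^{-1/\alpha}$, so ``dominated convergence'' plus the bound $\int_{|v|>1}\tilde m_t(v)\,dv=O(1)$ does not by itself give convergence of the integral. The standard repair (and the paper's) is to split once more at $|v|=u_0t^{-1/\alpha}$: inside this cut \textbf{H1} gives the majorant $C|v|^{-\alpha-1}$, while beyond it the contribution is at most $2\,t\,\mu(\{|u|>u_0\})\to 0$; alternatively, a Scheff\'e-type argument works, since your tail asymptotic actually shows that the total masses $t\mu(\{|u|>t^{1/\alpha}\})$ converge to $\int_{|v|>1}m_{\alpha,C_\pm}(v)\,dv=(C_++C_-)/\alpha$, not merely that they are $O(1)$. (ii) If one of $C_\pm$ vanishes, the claimed two-sided lower bound $m(u)\ge c|u|^{-\alpha-1}$ on $0<|u|\le u_0$ fails; but since $1-\cos(\lambda v)\ge 0$ you may simply restrict the integral to the side of the origin where the corresponding constant is positive (\textbf{H1} guarantees $C_++C_->0$), and the bound $-\mathrm{Re}\,\Psi_t(\lambda)\ge c'|\lambda|^{\alpha}$ survives unchanged. (A cosmetic remark of the same kind applies to the upper bound $\tilde m_t(v)\le C|v|^{-\alpha-1}$ on $|v|\le 1$, which uses boundedness of $m$ on $\{u_0\le |u|\le 1\}$, available from \textbf{H2}, in addition to \textbf{H1}.) With these repairs the proposal is correct.
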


Denote
$$
Y_t=X_t-U_t=\gamma t^{1/\alpha}\zeta_{\alpha,t} +\beta t-\gamma c_t,
$$
then the distribution density for $Y_t$ under $\pr^{\theta}$ equals
\be\label{ptY}
p_t^Y(\theta; x)=\gamma^{-1}t^{-1/\alpha}\phi_{\alpha,t}\Big(\gamma^{-1}t^{-1/\alpha}(x-\beta t+\gamma c_t)\Big),
\ee
and consequently
\be\nn
\ba
\prt_\beta p_t^Y(\theta; x)&=-\gamma^{-2}t^{1-2/\alpha}\phi_{\alpha,t}'\Big(\gamma^{-1}t^{-1/\alpha}(x-\beta t+\gamma c_t)\Big),\\
\prt_\gamma p_t^Y(\theta; x)&=-\gamma^{-2}t^{-1/\alpha}\bigg[ \phi_{\alpha,t}\Big(\gamma^{-1}t^{-1/\alpha}(x-\beta t+\gamma c_t)\Big)
\\&\hspace*{2cm}+
{x-\beta t\over \gamma t^{1/\alpha}}\phi_{\alpha,t}'\Big(\gamma^{-1}t^{-1/\alpha}(x-\beta t+\gamma c_t)\Big)\bigg]
\\&=-\gamma^{-2}t^{-1/\alpha}\Big[\phi_{\alpha,t}(z)+z\phi_{\alpha,t}'(z)-c_t t^{-1/\alpha}\phi_{\alpha,t}'(z)\Big]_{z=\gamma^{-1}t^{-1/\alpha}(x-\beta t+ \gam c_t)}.
\ea\ee
Because $X_t=Y_t+U_t$ and $Y,U$ are independent, we have
\be\label{pt_conv_Y}
p_t(\theta;x)=\int_{\Re}p_t^Y(\theta;x-t^{1/\alpha}y)\nu_t(dy),
\ee
where $\nu_t$ denotes the law of $t^{-1/\alpha}U_t$.

Taking (\ref{ptY}) into account, we can re-arrange the above convolution formula for $p_t(\theta;x)$ in the following way:
\be\label{pt}
p_t(\theta;x)=\gamma^{-1}t^{-1/\alpha} f_t\Big(\theta;\gamma^{-1}t^{-1/\alpha}(x-\beta t+\gamma c_t)\Big),
\ee
where
$$
f_t(\theta;z)=\int_{\Re}\phi_{\alpha,t}(z-\gamma ^{-1}y)\nu_t(dy).
$$
Note that (\ref{neglig}) implies  $\nu_t\Rightarrow\delta_0, t\to 0$, and $\gamma$ is separated from $0$ when $\theta=(\beta,\gamma)\in \tilde \Theta$. Then by the first assertion in (\ref{conv}), for every $N>0$
\be\label{pt_conv}
\sup_{\theta\in \tilde \Theta}\sup_{|z|\leq N}|f_t(\theta;z)-\phi_{\alpha, C_\pm}(z)|\to 0, \quad t\to 0+.
\ee

It follows from (\ref{pt_conv_Y}) that
$$
\prt_\beta p_t(\theta;x)=\int_{\Re}\prt_\beta p_t^Y(\theta;x-t^{1/\alpha}y)\nu_t(dy), \quad \prt_\gamma p_t(\theta;x)=\int_{\Re}\prt_\gamma p_t^Y(\theta;x-t^{1/\alpha}y)\nu_t(dy)
$$
Then, similarly as above,  we have
\be\label{gt}\ba
\prt_\beta p_t(\theta; x)&=-\gamma^{-2}t^{1-2/\alpha}f_t^{(1)}\Big(\theta;\gamma^{-1}t^{-1/\alpha}(x-\beta t+\gamma c_t)\Big),\\
\prt_\gamma p_t(\theta; x)&=-\gamma^{-2}t^{-1/\alpha}\Big[ f_t(\theta;z)+f_t^{(2)}(\theta;z)-c_tt^{-1/\alpha}f_t^{(1)}(\theta;z)\Big]_{z=\gamma^{-1}t^{-1/\alpha}(x-\beta t+\gamma c_t)}
\ea\ee
with
$$
f_t^{(1)}(\theta;z)=\int_{\Re}\phi_{\alpha,t}'(z-\gamma ^{-1}y)\nu_t(dy), \quad f_t^{(2)}(\theta;z)=\int_{\Re}(z-\gamma ^{-1}y)\phi_{\alpha,t}'(z-\gamma ^{-1}y)\nu_t(dy),
$$
and for every $N>0$
\be\label{gt_conv}
\sup_{\theta\in \tilde \Theta} \sup_{|z|\leq N}|f_t^{(1)}(\theta;z)-\phi_{\alpha, C_\pm}'(z)|\to 0,\quad \sup_{\theta\in \tilde \Theta}\sup_{|z|\leq N}|f_t^{(2)}(\theta;z)-z\phi_{\alpha, C_\pm}'(z)|\to 0, \quad t\to 0+.
\ee

Below we use formulae (\ref{pt}) -- (\ref{gt_conv}) to  control the ``local'' behavior of the functions $g_t, q_t$ involved into \textbf{A1}--\textbf{A4}. To control the ``global'' behavior, we  use the following  moment bound; we evaluate this bound in
 Section \ref{s3} below.

\begin{prop}\label{prop2} Under conditions of Theorem \ref{thm1}, for every $\tilde \Theta$  and every  $\delta_1\in (0, \delta)$ (where $\delta$ comes from \textbf{H2}),
$$
\sup_{n\geq 1, \theta\in \tilde \Theta} \E\left|\tilde r(n)^{\top}g_{h_n}\left(\theta;X^{\theta}_{h_n}\right)\right|^{2+\delta_1}<\infty.
$$
\end{prop}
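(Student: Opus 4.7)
The plan is to represent $g_{h_n}(\theta;\cdot)$ evaluated at $X^\theta_{h_n}$ as a conditional expectation of a Malliavin-type weight and then apply conditional Jensen's inequality to reduce the desired moment bound to an unconditional $L_{2+\delta_1}$-bound for that weight. Concretely, I would build a Malliavin calculus based on perturbations of the Poisson random measure of $Z$ in the two directions of $\theta=(\beta,\gamma)$: translation in $\beta$ realised by an additive jump perturbation $u\mapsto u+\epsilon\rho(u)$ localised near the origin (since a global translation is not an absolutely continuous change of the jump measure), and dilation in $\gamma$ realised by the multiplicative perturbation $u\mapsto(1+\epsilon)u$. Under \textbf{H1}--\textbf{H2} these perturbations are admissible and yield an integration-by-parts identity
$$\E\bigl[(\nabla_\theta\phi)(X^\theta_t)\bigr]=\E\bigl[\phi(X^\theta_t)\,W_\theta(t)\bigr]$$
for smooth $\phi$, with $W_\theta(t)=(W^\beta_\theta(t),W^\gamma_\theta(t))^\top$ a Skorokhod-type integral whose integrands involve the logarithmic derivatives $m'(u)/m(u)$ and $u\,m'(u)/m(u)+1$.

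Standard localisation then gives $g_t(\theta;y)=\E[W_\theta(t)\mid X^\theta_t=y]$, so that
$$\E\bigl|\tilde r(n)^\top g_{h_n}(\theta;X^\theta_{h_n})\bigr|^{2+\delta_1}\le\E\bigl|\tilde r(n)^\top W_\theta(h_n)\bigr|^{2+\delta_1}$$
by conditional Jensen, and the task reduces to bounding the right-hand side uniformly in $n$ and $\theta\in\tilde\Theta$. I would estimate the Skorokhod integrals by Kunita/BDG-type inequalities for compensated Poisson integrals, dominating the $(2+\delta_1)$-moment by quantities of the form $h_n\int|\cdot|^{2+\delta_1}\,\mu(du)$; integrability near the origin is ensured by \textbf{H1} together with the locally $\alpha$-stable scaling $h_n^{1/\alpha}$ that is absorbed into $\tilde r(n)$, while integrability away from the origin is furnished by the $\tau^{2+\delta}$-condition in \textbf{H2}. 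The exact structure of $\tilde r(n)$ matters here: the off-diagonal entry $c_{h_n}h_n^{-1}$ in $\tilde r(n)^\top$ is precisely what cancels the (possibly divergent) drift-correction term $c_t\,t^{-1/\alpha}f_t^{(1)}/f_t$ visible in \eqref{gt} for $\partial_\gamma p_t/p_t$, so after the rescaling the two components of $\tilde r(n)^\top W_\theta(h_n)$ are asymptotically of order one.

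The main obstacle is constructing Malliavin weights with the \emph{correct} asymptotic size. A Malliavin apparatus built from unrestricted jump perturbations produces weights whose $L_p$-norms blow up at a rate strictly faster than $\tilde r(n)^{-\top}$, causing the bound to collapse. One must therefore localise the perturbation profile $\rho$ to the small-jump zone in which the $\alpha$-stable self-similarity operates, introduce the compensating drift terms needed to keep the integrands in the domain of the Skorokhod integral, and verify that the correction pieces are themselves of the sharp order $\tilde r(n)^{-\top}$, uniformly in $\theta\in\tilde\Theta$ and in the law of $U$. The nuisance process $U$ itself does not require Malliavin manipulation, because it is absorbed by the conditioning; the hypothesis \eqref{neglig} is used only to guarantee that its presence does not spoil the stable-like scaling of the remaining ingredients. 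Designing this rescaled Malliavin apparatus to deliver asymptotically precise $L_p$-bounds matched to $\tilde r(n)^{-\top}$ is precisely the content of Section \ref{s3}.
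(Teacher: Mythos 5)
Your overall skeleton --- represent $g_t$ as a conditional expectation of an integration-by-parts weight, absorb the nuisance $U$ into the conditioning, apply conditional Jensen, and reduce to a uniform $L_{2+\delta_1}$-bound for the weight scaled by $\tilde r(n)^\top$ (including the correct observation that the off-diagonal entry turns $Z_t$ into $\tilde Z_t=Z_t+c_t$) --- is exactly the strategy of Theorem \ref{thm3}. But the core of your execution plan goes in the wrong direction at the decisive point. You propose to \emph{localise} the perturbation profile $\rho$ to the small-jump zone ``in which the $\alpha$-stable self-similarity operates''. For the drift component this is indeed viable (cf.\ Remark \ref{r3}), but for the scale component it fails: with $\rho$ supported in $\{|u|\le u_*\}$ the Malliavin ``variance'' $\dif_{t,\rho}X^\theta_t$ contains only small jumps ($\asymp t^{2/\alpha}$ in law), while the numerator $\partial_\gamma X^\theta_t=Z_t$ contains the large jumps, which are independent of the denominator; this yields the lower bound $\E\bigl(\int_0^t\int_{|u|>u_*}u\,\nu(ds,du)/\sqrt{\dif_{t,\rho}X^\theta_t}\bigr)^2\ge Ct^{1-2/\alpha}\to\infty$ of Remark \ref{r2}, so no uniform-in-$n$ bound of the required type can hold for such a weight. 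The paper's fix is the opposite of localisation: take $\rho(u)=u^2$ \emph{globally} (the modified weight \eqref{modif_weight}), so that large jumps enter the denominator and balance $\tilde Z_t$; the price is that $\dif_tX^\theta_t$ need not be square integrable, so \eqref{modif_g} must be justified by the two approximation steps (compactly supported $\rho_N\uparrow u^2$, then light-tailed $Z^L\to Z$). Your alternative device for the $\gamma$-direction, the dilation $u\mapsto(1+\eps)u$, does not rescue the localised scheme: for a locally $\alpha$-stable $\mu$ this transformation is not an absolutely continuous change of the jump configuration (for an exactly stable measure the dilated law of the point process is mutually singular with the original, the Hellinger integral diverging at the origin), and the corresponding divergence integrand $-um'(u)/m(u)-1$ tends to $\alpha\neq0$ at $0$, hence is not in $L_2(\mu)$; so the integration-by-parts identity you invoke in that direction is not available. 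The admissible route is the paper's: one perturbation with $\rho$ vanishing quadratically at $0$, and the parameter derivative $\nabla_\theta X^\theta_t=(t,Z_t)^\top$ inserted through $\delta_{t,\rho}(\nabla_\theta X^\theta_t/\dif_{t,\rho}X^\theta_t)$.

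A second, related gap is the moment bound itself. Estimating ``the Skorokhod integrals by Kunita/BDG-type inequalities, dominating the $(2+\delta_1)$-moment by $h_n\int|\cdot|^{2+\delta_1}\mu(du)$'' does not address the actual structure of the weight, which is a \emph{ratio}. The uniform bound in the paper rests on: the negative-moment estimate $\E(t^{-2/\alpha}\kappa_t)^{-p}\le C_p$ of Lemma \ref{l3} (obtained from the probability that no jump of size $\asymp t^{1/\alpha}$ occurs), the pathwise bound $|\dif^2_tX^\theta_t|/(\dif_tX^\theta_t)^{3/2}\le 2/\sqrt{\gamma}$, the exponential-moment bound for $t^{-1/\alpha}\int_0^t\int_{|u|\le t^{1/\alpha}}u\,\tilde\nu$, and the Cauchy--Schwarz pairing $|\zeta_t|/\sqrt{\eta_t}\le\sqrt{N_t}$ with $N_t$ Poisson of bounded intensity (similarly for $\delta_t(1)$, where \textbf{H2} supplies the $(2+\delta)$-moment of the large-jump part). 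Without these ingredients --- in particular some control of negative moments of the denominator, which your proposal never mentions --- the reduction to $h_n\int|\cdot|^{2+\delta_1}\mu(du)$ does not produce a bound that is uniform in $n$ after multiplication by $\tilde r(n)^\top$.
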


\subsection{Proofs of \textbf{A1}--\textbf{A4}.}\label{s22}

\subsubsection{Proof of \textbf{A1}}
For this it is sufficient to show that, for a fixed $t=h_n$, the mapping
\be\label{cont}
\Theta\ni \theta\mapsto q_t(\theta; \cdot)\in L_2(\Re)
\ee
is continuous, and for any $\theta_1, \theta_2$ such that the segment $[\theta_1, \theta_2]$ is contained in $\Theta$,
\be\label{NL}
\sqrt{p_t(\theta_1; \cdot)}-\sqrt{ p_t(\theta_2; \cdot)}
=\bigg(\int_{0}^1 q_t((1-s)\theta_1+s\theta_2; \cdot)\,ds\bigg)^{\top}(\theta_{1}-\theta_{2})
\ee
with the integral understood in the sense of convergence of the Riemann sums in $L_2(\Re)$. The argument here is similar and simpler to the one used in the proof of Theorem 2 in \cite{Kul_ivanenko}, hence we just outline the main steps. Define
$$
\Psi_\ve(z)=\begin{cases}0,&z<\ve/2,\\ {(z-\ve/2)^2\over 2\ve^{3/2}},&z\in [\ve/2,\ve],\\ \sqrt{z}-{7\sqrt{\ve}\over 8},&z\geq \ve.\end{cases}
$$
Then, by  the construction, for $z>0$
$$
\Psi_\ve(z)\to \Psi_0(z):=\sqrt{z}, \quad \Psi_\ve'(z)\to \Psi_0'(z)={1\over 2\sqrt{z}}, \quad \ve\to 0.
$$
Because $\Psi_\ve\in C^1, \eps>0$, we have  by Proposition \ref{prop1} and (\ref{pt}), (\ref{gt}) that $\Psi_\ve (p_{t}(\theta; x))$ depends smoothly on $\theta,x$ and $$
q_{t,\eps}(\theta; x):=\nabla_\theta\Big(\Psi_\ve (p_{t}(\theta; x))\Big)=\Psi_\ve' (p_{t}(\theta; x))\nabla_\theta p_{t}(\theta; x).
$$
Then
\be\nn
\Psi_\ve\Big(p_t(\theta_1; \cdot)\Big)-\Psi_\ve\Big(p_t(\theta_2; \cdot)\Big)
=\bigg(\int_{0}^1 q_{t,\eps}((1-s)\theta_1+s\theta_2; \cdot)\,ds\bigg)^{\top}(\theta_{1}-\theta_{2}),
\ee
and to prove (\ref{NL}) it is sufficient to prove that
$$
\Psi_\ve\Big(p_t(\theta; \cdot)\Big)\to \Psi_0\Big(p_t(\theta; \cdot)\Big)= \sqrt{p_t(\theta; \cdot)}, \quad \eps\to 0
$$
in $L_2(\Re)$ for every $\theta\in \Theta$, and that
$$
q_{t,\eps}(\theta; \cdot)\to q_t(\theta; \cdot), \quad \eps\to 0
$$
in $L_2(\Re)$ uniformly in $\theta\in [\theta_1, \theta_2]$. The latter would also provide  that the function (\ref{cont}) is continuous as a uniform limit of continuous functions. Let us prove the second convergence, since the proof of the first one is similar and simpler. By the construction, we have
$0\leq \Psi_\eps'(z)\leq \Psi'_0(z)=(2\sqrt z)^{-1}$, therefore
$$
q_{t,\eps}(\theta; x)=\Psi_\ve' (p_{t}(\theta; x))\nabla_\theta p_{t}(\theta; x)=\Upsilon_\ve (p_{t}(\theta; x)) g_{t}(\theta; x),
$$
where
$$
\Upsilon_\ve(z)=\Psi_\eps'(z)z\quad
\left\{
  \begin{array}{ll} \leq (1/ 2)\sqrt{z}
    , & z>0; \\
    = (1/ 2)\sqrt{z}, & z\geq \eps.
  \end{array}
\right.
$$
Hence
$$
\int_\Re(q_{t,\eps}(\theta; x)-q_{t}(\theta; x))^2\, dx\leq {1\over 4}\int_{\{x: p_{t}(\theta; x)\leq \eps\}}\Big(g_{t}(\theta; x)\Big)^2p_{t}(\theta; x)\, dx.
$$
Recall that $t=h_n$. Take $\tilde \Theta$ in Proposition \ref{prop2} equal to the segment $[\theta_1, \theta_2]$, then
$$
\sup_{\theta\in \tilde \Theta}\int_{\Re}\Big|g_{t}(\theta; x)\Big|^{2+\delta_1}p_{t}(\theta; x)\, dx=\sup_{\theta\in \tilde \Theta}\E\Big|g_{t}(\theta; X_{t})\Big|^{2+\delta_1}<\infty.
$$
Hence by the H\"older inequality
$$
\int_\Re(q_{t,\eps}(\theta; x)-q_{t}(\theta; x))^2\, dx\leq C \left(\int_{\{x: p_{t}(\theta; x)\leq \eps\}}p_{t}(\theta; x)\, dx\right)^{\delta_1/(2+\delta_1)}.
$$
The density $p_t(\theta; x)$ is given explicitly by (\ref{pt}). Using this representation and changing the variables $z=\gamma^{-1}t^{-1/\alpha}(x-\beta t+\gamma c_t)$,  we get $$
\int_{\{x:\, p_{t}(\theta; x)\leq \eps\}}p_{t}(\theta; x)\, dx= \int_{\{z:\, f(\theta;z)\leq \gamma t^{1/\alpha}\eps\}}f(\theta;z)\, dz.
$$
We have $\phi_{\alpha,t}\in L_1(\Re)$, and therefore the mapping
$$
[\theta_1, \theta_2]\ni \theta=(\beta, \gamma)\mapsto \phi_{\alpha,t}(\cdot-\gamma^{-1}y)\in L_1(\Re)
$$
is continuous. Hence the mapping
$$
[\theta_1, \theta_2]\ni \theta\mapsto f(\theta;\cdot)=\int_{\Re}\phi_{\alpha,t}(\cdot-\gamma^{-1}y)\nu_t(dy)
$$
is continuous, as well. This finally implies that
$$
\int_{\{x:\, p_{t}(\theta; x)\leq \eps\}}p_{t}(\theta; x)\, dx= \int_{\{z:\, f(\theta;z)\leq \gamma t^{1/\alpha}\eps\}}f(\theta;z)\, dz\to 0, \quad \eps\to 0
$$
uniformly in $\theta\in [\theta_1, \theta_2]$, which completes the proof of the required convergence and provides \textbf{A1}.

\subsubsection{Proof of \textbf{A2}}
Denote $$
\Gamma_{k,n}^\theta =\tilde r(n)^\top g_{h_n}\left(\theta;X^{\theta}_{kh_n}-X^{\theta}_{(k-1)h_{n}}\right), \quad k=1, \dots, n,
$$ then
$$
\sum_{k=1}^n\left(r(n)^\top g_{h_n}\left(\theta;X^{\theta}_{kh_n}-X^{\theta}_{(k-1)h_{n}}\right)\right)^{\otimes2}={1\over n}
\sum_{k=1}^n\left(\Gamma_{k,n}^\theta\right)^{\otimes2}.
$$
Since $X$ is a L\'evy process, $\{\Gamma_{k,n}^\theta\}_{1\leq k\leq n}$ is a triangular array of random vectors, which  are row-wise independent and identically distributed. Let us analyze the common law of $\Gamma_{k,n}^\theta$ at an $n$-th row.

Denote $$
\xi^\theta_{k,n}=\gamma^{-1}h_{n}^{-1/\alpha}(X^{\theta}_{kh_n}-X^{\theta}_{(k-1)h_{n}}-\beta h_n+\gamma c_{h_n}), \quad  k=1, \dots, n,
$$
which are i.i.d. random variables with
$$
\xi^\theta_{1,n}\stackrel{d}{=}\zeta_{\alpha, h_n}+\gamma^{-1}h_{n}^{-1/\alpha}U_{h_n}.
$$
By  statement (1) of Proposition \ref{prop1} and (\ref{neglig}), we have then
\be\label{weak}
\xi^\theta_{1,n}\Rightarrow Z_1^{\alpha, C_\pm}.
\ee

Next, by (\ref{pt}), (\ref{gt}) the components of $g_{h_n}(\theta;X^{\theta}_{kh_n}-X^{\theta}_{(k-1)h_{n}})$ are given by
$$
g_{h_n}^1\left(\theta;X^{\theta}_{kh_n}-X^{\theta}_{(k-1)h_{n}}\right)=-\gamma^{-1}h_n^{1-1/\alpha}{
f_{h_n}^{(1)}(\theta;\xi^\theta_{k,n})\over f_{h_n}(\theta;\xi^\theta_{k,n})},
$$
$$
g_{h_n}^2\left(\theta;X^{\theta}_{kh_n}-X^{\theta}_{(k-1)h_{n}}\right)=-\gamma^{-1}\left[1+{f_{h_n}^{(2)}(\theta;\xi^\theta_{k,n})\over f_{h_n}(\theta;\xi^\theta_{k,n})}\right]+\gamma^{-1}c_{h_n}h_n^{-1/\alpha}{
f_{h_n}^{(1)}(\theta;\xi^\theta_{k,n})\over f_{h_n}(\theta;\xi^\theta_{k,n})}.
$$
Recall that
$$
\tilde r(n)^\top=\left(
        \begin{array}{cc}
         h_n^{1/\alpha-1} & 0 \\
          c_{h_n}h_n^{-1} & 1 \\
        \end{array}
      \right),
$$
hence we can write finally
$$
\Gamma_{k,n}^\theta=\gamma^{-1}G_{\alpha, h_n}(\theta;\xi^\theta_{k,n}),
$$
where
the vector-valued functions $G_{\alpha, h_n}$
have the components
$$
G_{\alpha, h_n}^1(\theta;x)=-{
f_{h_n}^{(1)}(\theta;x)\over f_{h_n}(\theta;x)}, \quad G_{\alpha, h_n}^2(\theta;x)=-1-{f_{h_n}^{(2)}(\theta;x)\over f_{h_n}(\theta;x)}.
$$
Denote by $G_{\alpha,C_\pm}$  the vector-valued function with the components
$$
G_{\alpha,C_\pm}^1(x)=-{\phi_{\alpha,C_\pm}'(x)\over \phi_{\alpha,C_\pm}(x)}, \quad G_{\alpha,C_\pm}^2(x)=-1-{x\phi_{\alpha,C_\pm}'(x)\over \phi_{\alpha,C_\pm}(x)},
$$
and denote  for $\eps>0, N>0$
$$
K_{\eps,N}=\{x: |x|\leq N, \phi_{\alpha,C_\pm}(x)\geq \eps\},
$$
which is a compact set in $\Re$.
It follows from (\ref{pt_conv}), (\ref{gt_conv}) that, for every fixed $\theta\in \Theta, \eps>0, N>0$,
$$G_{\alpha, t}(\theta;x)\to G_{\alpha, C_\pm}(x), \quad t\to 0+
$$
uniformly with respect to $x\in K_{\eps,N}$.
By (\ref{weak}) and continuity of the limit distribution,
\begin{equation}
\limsup_{n\to\infty}\mathsf{P}(\xi^{\theta}_{1,n} \notin K_{\varepsilon,N})
\le
\mathsf{P}( Z^{\alpha, C_{\pm}}_{1} \notin K_{\varepsilon,N}).
\nonumber
\end{equation}
We also have
$$
\P(Z^{\alpha,C_\pm}_1\not \in K_{\eps,N})\to 0, \quad \eps\to 0, \quad N\to \infty.
$$
Let us summarize: the random vectors $\Gamma_{k,n}^\theta$ are represented as images of the i.i.d. random variables $\xi_{k,n}^\theta$  under the functions $G_{\alpha, h_n}(\theta;\cdot)$, and
\begin{itemize}
  \item the common law of $\xi_{k,n}^\theta$ weakly converge to the  law of $Z^{\alpha,C_\pm}_1$;
  \item on every compact set $K_{\eps, N}$, the functions $G_{\alpha, h_n}(\theta;\cdot)$ converge uniformly to the function $G_{\alpha, C_\pm}$ which is continuous on this compact;
  \item by choosing $\eps>0$ small and $N>0$ large, the probability for  $\xi_{k,n}^\theta\in K_{\eps, N}$ can be made arbitrarily close to 1.
\end{itemize}
Because the weak convergence is preserved by continuous mappings, we deduce from the above  that
the common law of $\Gamma_{k,n}^\theta, k=1, \dots, n$ weakly converge as $n\to \infty$ to the law of $
\Gamma^\theta=\gamma^{-1}G_{\alpha, C_\pm}(Z^{\alpha, C_\pm}_1)$. On the other hand Proposition \ref{prop2} yields that  the family
$
\{(\Gamma_{k,n}^\theta)^{\otimes 2}\}
$
is uniformly integrable, hence by the Law of Large Numbers for independent random variables
$$
\E\left|{1\over n}
\sum_{k=1}^n(\Gamma_{k,n}^\theta)^{\otimes 2}-\E (\Gamma^\theta)^{\otimes 2}\right|\to 0, \quad n\to \infty.
$$
Observe  that it is an easy calculation to show that the covariation matrix for $\Gamma^\theta$ equals $\Sigma(\theta)$ given by the second identity in (\ref{answer}). Taking $\theta=\theta_0$, we complete the proof of \textbf{A2}.

\subsubsection{Proof of \textbf{A3}}
Because
$$
n\int_{\Re}\left|r(n)g_{h_n}\left(\theta;y\right)\right|^{2+\delta_1}p_{h_n}\left(\theta;y\right)\, dy=n^{-\delta_1/2}\E \left|\tilde r(n)g_{h_n}\left(\theta;X^{\theta}_{h_n}\right)\right|^{2+\delta_1},
$$
assertion \textbf{A3}  follows from Proposition  \ref{prop2} immediately.

\subsubsection{Proof of \textbf{A4}}
We have
$$
q_t(\theta;x)={1\over 2}g_t(\theta;x)\sqrt{p_t(\theta;x)},
$$
hence for any $\theta_{1},\theta_{2}\in \Theta$
$$\ba
n\int_{\R}&\left|r(n)^{\top}\left(q_{h_n}\left(\theta_2;x\right)-
q_{h_n}\left(\theta_1;x\right)\right)\right|^2\,dx\\
&\hspace*{1cm}={1\over 4 }\int_{\Re}\left|\tilde r(n)^{\top}g_{h_n}\left(\theta_2
;x\right)\sqrt{p_{h_n}(\theta_2;x)\over p_{h_n}(\theta_1;x)}-\tilde r(n)^{\top}g_{h_n}\left(\theta_1
;x\right)\right|^2p_{h_n}(\theta_1;x)\, dx
\\&\hspace*{1cm}={1\over 4 }\E\left|{1\over \gamma_2}G_{\alpha, h_n}(\theta_2,\xi_{1,n}^{\theta_1})
\sqrt{f_{h_n}(\theta_2;\xi_{1,n}^{\theta_1})\over f_{h_n}(\theta_1;\xi_{1,n}^{\theta_1})}-{1\over \gamma_1}G_{\alpha, h_n}(\theta_1,\xi_{1,n}^{\theta_1})\right|^2;
\ea
$$
we keep using the notation introduced in the proof of \textbf{A2}.
Take $\theta_1=\theta_0$, and let $\theta_n=(\beta_{n},\gamma_{n})\to \theta_0$ be arbitrary sequence. It follows from (\ref{pt_conv}), (\ref{gt_conv}) that for every fixed $\eps>0, N>0$
$$
\sup_{x\in K_{\eps, N}}\left|{1\over \gamma_n}G_{\alpha, h_n}(\theta_n,x)
\sqrt{f_{h_n}(\theta_n;x)\over f_{h_n}(\theta_0;x)}-{1\over \gamma_0}G_{\alpha, h_n}(\theta_0,x)\right|\to 0, \quad n\to \infty.
$$

 Then, by the Cauchy inequality $(a+b)^2\leq 2a^2+2b^2$,
$$\ba
&\limsup_{n\to \infty}n\int_{\R}\left|r(n)^{\top}\left(q_{h_n}\left(\theta_n
;x\right)-
q_{h_n}\left(\theta_0;x\right)\right)\right|^2\,dx\\&\leq  \limsup_{n\to \infty}\left({1\over 2\gamma_n^2}\E\left|G_{\alpha, h_n}(\theta_n,\xi_{1,n}^{\theta_0})\right|^2
{f_{h_n}(\theta_n;\xi_{1,n}^{\theta_0})\over f_{h_n}(\theta_0;\xi_{1,n}^{\theta_0})}1_{\xi_{1,n}^{\theta_0}\not \in K_{\eps, N}}+
{1\over 2\gamma_0^2}\E\left|G_{\alpha, h_n}(\theta_0,\xi_{1,n}^{\theta_0})\right|^2
1_{\xi_{1,n}^{\theta_0}\not \in K_{\eps, N}}
\right)
\\&\hspace*{1cm}=  \limsup_{n\to \infty}\left({1\over 2\gamma_n^2}\E\left|G_{\alpha, h_n}(\theta_n,\xi_{1,n}^{\theta_n})\right|^2
1_{\xi_{1,n}^{\theta_n}\not \in K_{\eps, N}}+
{1\over 2\gamma_0^2}\E\left|G_{\alpha, h_n}(\theta_0,\xi_{1,n}^{\theta_0})\right|^2
1_{\xi_{1,n}^{\theta_0}\not \in K_{\eps, N}}
\right).
\ea
$$
Since
$$
\gamma^{-1}G_{\alpha, h_n}(\theta;\xi^\theta_{1,n})=\Gamma_{1,n}^\theta=\tilde r(n)^\top g_{h_n}\left(\theta;X^{\theta}_{kh_n}-X^{\theta}_{(k-1)h_{n}}\right),
$$
we deduce using Proposition \ref{prop2} and the H\"older inequality that
$$\ba
&\limsup_{n\to \infty}n\int_{\R}\left|r(n)^{\top}\left(q_{h_n}\left(\theta_n
;x\right)-
q_{h_n}\left(\theta_0;x\right)\right)\right|^2\,dx
\\&\hspace*{1cm}\leq C \limsup_{n\to \infty}
\left(\P(\xi_{1,n}^{\theta_n}\not \in K_{\eps, N})^{\delta_{1}/(2+\delta_{1})}
+\P(\xi_{1,n}^{\theta_0}\not \in K_{\eps, N})^{\delta_{1}/(2+\delta_{1})}\right)\ea
$$
with some constant $C$. We have
$$
\xi_{1,n}^{\theta_n}\Rightarrow Z_1^{\alpha, C_\pm}, \quad \xi_{1,n}^{\theta_0}\Rightarrow Z_1^{\alpha, C_\pm}, \quad n\to \infty,
$$
hence we get
$$
\limsup_{n\to \infty}n\int_{\R}\left|r(n)\left(q_{h_n}\left(\theta_n
;x\right)-
q_{h_n}\left(\theta_0;x\right)\right)\right|^2\,dx\leq 2C\P(Z_1^{\alpha, C_\pm}\not \in K_{\eps, N})^{\delta_{1}/(2+\delta_{1})}.
$$
Recall that $\eps>0, N>0$ here are arbitrary. Taking in the above inequality $\eps\to 0, N\to \infty$ we get finally
$$
\limsup_{n\to \infty}n\int_{\R}\left|r(n)\left(q_{h_n}\left(\theta_n
;x\right)-
q_{h_n}\left(\theta_0;x\right)\right)\right|^2\,dx=0,
$$
which completes the proof of  \textbf{A4}.\qed

\medskip

\subsection{Outline of the proof of Theorem \ref{thm2}} To get the required LAN property uniformly in $U\in \mathfrak{U}$, it is enough to fix a sequence $U^n$ of L\'evy processes, such that
$$
h^{-1/\alpha}_nU^n_{h_n}\to 0
$$
in probability, and repeat the above argument for a modified statistical model, where the process $X^\theta$ in the $n$-th sample is replaced by
$$X^{\theta,n}_t=\beta t+\gamma Z_t+U^n_t.$$ The moment bound in Proposition \ref{prop2} is, to a very high extent,  insensitive with respect to the process $U$; in particular, we will show in Section \ref{s3} that
\be\label{lp_bound_ext}
\sup_{n\geq 1, \theta\in \tilde \Theta} \E\left|\tilde r(n)g_{h_n}\left(\theta;X^{\theta,n}_{h_n}\right)\right|^{2+\eps}<\infty.
\ee
The law of the process $U$ is involved in the definition of the functions $f, f^{(1)}, f^{(2)}$, but it is straightforward to see that the relations (\ref{pt_conv}), (\ref{gt_conv}) in fact hold true uniformly with respect to $U\in \mathfrak{U}$. Finally, the
random variables
$$
\xi_{k,n}^{\theta,n}=\gamma^{-1}h_{n}^{-1/\alpha}(X^{\theta,n}_{kh_n}-X^{\theta,n}_{(k-1)h_{n}}-\beta h_n+\gamma c_{h_n})\stackrel{d}{=}\zeta_{\alpha, h_n}+\gamma^{-1}h_{n}^{-1/\alpha}U_{h_n}^n
$$
weakly converge to $Z^{\alpha, C_\pm}_1$. Hence repeating, with obvious notational changes, the calculations from Section \ref{s22}, we get properties \textbf{A1}  -- \textbf{A4} for the modified model, which proves the required LAN property, uniform in $U\in \mathfrak{U}$. \qed

\section{Malliavin calculus-based integral representation  for the derivative of the log-likelihood function and related $L_p$-bounds}\label{s3}

Our main aim in this section is to prove Proposition \ref{prop2}, which is the cornerstone of the proof of Theorem \ref{thm1}.  With this purpose in mind, we give an integral representation for the derivative of the log-likelihood function by means of a certain version of the Malliavin calculus. In the diffusive case, such a representation was developed by Gobet in \cite{Gobe} and \cite{Gob02}; see also \cite{Corc}.  In the L\'evy setting, the choice of a particular design of such a calculus is a non-trivial problem, and  we discuss it in details   below.

\subsection{The main statement: formulation, discussion, and an outline of the proof}\label{s31}

In what follows,  $\nu(ds, du)$ and $\tilde \nu(ds,du)=\nu(ds, du)-ds\mu(du)$ are, respectively, the Poisson point measure and the compensated Poisson measure
from the L\'evy-It\^o representation of $Z$:
$$
Z_t=\int_0^t\int_{|u|>1}u\nu(d s, d u)+ \int_0^t\int_{|u|\leq
1}u\tilde\nu(d s, d u).
$$
We denote $$
\dif_{t} X_t^\theta=\gamma\dif_{t} Z_t=\gamma \int_0^{t}\int_{\Re}u^2\nu(d s,d u),\quad  \dif^2_{t}
X^\theta_t=2\gamma\int_0^{t}\int_{\Re}u^3\,
\nu(ds,du),
$$
the genealogy of the notation will become clear later. To
 define the stochastic integrals with respect to $\nu$ properly, we decompose them in two integrals (which is a usual trick). The ``small jump'' parts, which correspond to values $u\in [-1, 1]$, are well defined  because  the functions $u^2, u^3$ are integrable with respect to $\mu$ on $[-1,1]$; respective integrals with respect to $\nu$  are understood in $L_1$ sense. The ``large jump'' parts of the integrals with respect to $\nu$ are understood in the path-wise way, i.e. as sums over finite set of jumps. Next, we denote
$$
\chi(u)=-u^2{m'(u)\over m(u)}-2u
$$
and put
$$
\delta_{t}(1)=\int_0^t\int_{|u|\leq u_0}\chi(u)\tilde
\nu(d s,d u)+\int_0^t\int_{|u|> u_0}\chi(u)
\nu(d s,d u)+ tu_0^2\Big[m(u_0)-m(-u_0)\Big],
$$
where $u_0$ comes from the condition \textbf{H2}. By \textbf{H2}, $|\chi(u)|\leq C|u|$ for $|u|\leq u_0$, hence the ``small jump'' integral above is well defined in $L_2$ sense; the ``large jump'' integral is understood in the path-wise sense.

We define the \emph{modified Malliavin weight} (we postpone for a while the explanation of the terminology) as the vector $\Xi^\theta_t=(\Xi^{\beta}_{t}, \Xi^{\gamma}_{t})^\top$ with
\be\label{modif_weight}\Xi^{\beta}_{t}={t \delta_{t}(1)\over \dif_{t } X^{\theta}_t}+{t
\dif^2_{t} X^{\theta}_t \over (\dif_{t}
X^{\theta}_t)^2},\quad \Xi^{\gamma}_{t}={Z_t \delta_{t}(1)\over
\dif_{t} X^{\theta}_t}+{Z_t \dif^2_{t} X^{\theta}_t \over (\dif_{t}
X^{\theta}_t)^2}-{1\over \gamma}.\ee

Denote by $\E^{t,\theta}_{x,y}$ the expectation with respect to the law of the \emph{bridge} of the process $X^\theta$ conditioned by $X_0^\theta=x, X_t^\theta=y$. Note that because the process $X^\theta$ possesses a continuous transition probability density $p_t(\theta;x,y)=p_t(\theta;y-x)$, the law of the {bridge} is well defined for any
$t,x,y$ such that $p_t(\theta;x,y)>0$ (cf. \cite{bridges}).

The main statement in this section is the following.

\begin{thm}\label{thm3}
\begin{enumerate}
\item Let $\delta$ be the same as in \textbf{H2}. Then for every $\delta_1\in (0, \delta)$ and every $\tilde \Theta$,
\be\label{un_xi_bound}
\sup_{\theta\in \tilde \Theta}\sup_{n\geq 1}\E\Big|\tilde r(n)^\top \Xi^{\theta}_{h_n}\Big|^{2+\delta_1}<\infty.
\ee

\item The following intergal representation formula holds true:
\be\label{modif_g}
g_t(\theta; x)=\begin{cases}\E^{t,\theta}_{0,x}\Xi^{\theta}_{t}, &  p_t(\theta; x)>0,\\
0,&\hbox{otherwise}.\end{cases} \ee

\end{enumerate}
\end{thm}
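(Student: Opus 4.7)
\textit{Setup and Part (2).} The notation $\dif_t X^\theta_t$, $\dif^2_t X^\theta_t$, $\delta_t(1)$ is the signature of a jump-size perturbation $T^\epsilon$ (a convenient explicit choice is $T^\epsilon(u)=u/(1-\epsilon u)$) normalized so that $T^0=\mathrm{id}$, $\partial_\epsilon T^\epsilon|_{\epsilon=0}(u)=u^2$, and $\partial^2_\epsilon T^\epsilon|_{\epsilon=0}(u)=2u^3$; this makes $\gamma\int u^2\,d\nu$ and $2\gamma\int u^3\,d\nu$ the first and second ``Malliavin derivatives'' of $X^\theta_t$ along the perturbation. A direct computation identifies $\partial_\epsilon\log\bigl[m(T^{-\epsilon}(u))|(T^{-\epsilon})'(u)|\bigr]|_{\epsilon=0}$ with $-u^2 m'(u)/m(u)-2u=\chi(u)$, so by Girsanov for Poisson point measures $\delta_t(1)$ is the Malliavin divergence of the constant functional $1$. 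For (2) I combine the chain rule $\partial_\epsilon f(X^\theta_t(T^\epsilon\nu))|_{\epsilon=0}=f'(X^\theta_t)\dif_t X^\theta_t$ with integration-by-parts applied to the weighted functional $G=1/\dif_t X^\theta_t$, obtaining $\E[f'(X^\theta_t)]=\E\bigl[f(X^\theta_t)(\delta_t(1)/\dif_t X^\theta_t+\dif^2_t X^\theta_t/(\dif_t X^\theta_t)^2)\bigr]$; multiplying by $t=\partial_\beta X^\theta_t$ gives $\partial_\beta\E f(X^\theta_t)=\E[f(X^\theta_t)\Xi^\beta_t]$. Using instead $G=Z_t/\dif_t X^\theta_t$ and $\partial_\epsilon Z_t(T^\epsilon)|_{\epsilon=0}=\dif_t X^\theta_t/\gamma$ gives the $-1/\gamma$-correction and $\partial_\gamma\E f(X^\theta_t)=\E[f(X^\theta_t)\Xi^\gamma_t]$. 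Conditioning on $X^\theta_t=y$ yields \eqref{modif_g}.

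\textit{Scaling heuristic for (1).} Under H1 the ingredients of $\Xi^\theta_t$ scale as
\begin{equation*}
\dif_t X^\theta_t\asymp t^{2/\alpha},\quad \dif^2_t X^\theta_t=O_p(t^{3/\alpha}),\quad \delta_t(1)=O_p(t^{1/\alpha}),\quad Z_t+c_t=t^{1/\alpha}\zeta_{\alpha,t},
\end{equation*}
the first because $\int_0^t\!\int u^2\,d\nu$ is a locally $(\alpha/2)$-stable subordinator, the third because $\chi(u)\sim(\alpha-1)u$ near $0$, the last by Proposition \ref{prop1}. Substituting into \eqref{modif_weight} gives $\Xi^\beta_t\asymp t^{1-1/\alpha}$ and, after regrouping the off-diagonal contribution from $\tilde r(n)^\top$,
\begin{equation*}
\tfrac{c_t}{t}\,\Xi^\beta_t+\Xi^\gamma_t=(Z_t+c_t)\!\left[\tfrac{\delta_t(1)}{\dif_t X^\theta_t}+\tfrac{\dif^2_t X^\theta_t}{(\dif_t X^\theta_t)^2}\right]-\tfrac{1}{\gamma}=\zeta_{\alpha,t}\cdot O_p(1)-\tfrac{1}{\gamma},
\end{equation*}
so both entries of $\tilde r(n)^\top\Xi^\theta_{h_n}$ are tight, matching exactly the rate $r(n)$ in \eqref{answer}.

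\textit{Upgrading to uniform $L_{2+\delta_1}$-bounds.} To promote this heuristic to \eqref{un_xi_bound} I would proceed in three technical ingredients: (i) via BDG/Kunita inequalities and H2 (using $|\chi(u)|\le(\tau(u)+2)|u|$), $\E|\delta_t(1)|^p\le Ct^{p/\alpha}$ and $\E|\dif^2_t X^\theta_t|^p\le Ct^{3p/\alpha}$ for $p\le 2+\delta$, with constants uniform in $\theta\in\tilde\Theta$, the $\tau^{2+\delta}$-moment from H2 handling the large-jump contributions; (ii) the crucial \emph{negative} moment bound $\E(\dif_t X^\theta_t)^{-p}\le Ct^{-2p/\alpha}$, obtained by dominating $\E\exp(-\lambda\dif_t X^\theta_t)$ via H1 by the Laplace transform of an $(\alpha/2)$-stable subordinator ($\le C\exp(-c(\lambda t^{2/\alpha})^{\alpha/2})$) and then applying the Mellin formula $S^{-p}=\Gamma(p)^{-1}\int_0^\infty\lambda^{p-1}e^{-\lambda S}\,d\lambda$; (iii) Hölder's inequality to combine (i) and (ii) in each summand of \eqref{modif_weight}.

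\textit{Main obstacle.} The most delicate step is combining (i)--(ii) for the second component of $\tilde r(n)^\top\Xi^\theta_{h_n}$: the factor $\zeta_{\alpha,t}$ arising from $Z_t+c_t$ has only finite moments of order $<\alpha$, which may be $<2$, so a brute Hölder decoupling of $\zeta_{\alpha,t}$ from the Malliavin quotient is insufficient. One must exploit cancellations at the pre-limit level between $Z_t$ and $[\delta_t(1)+\dif^2_t X^\theta_t/\dif_t X^\theta_t]/\dif_t X^\theta_t$; the qualitative reason they exist is that the limiting score $x\mapsto x\phi'_{\alpha,C_\pm}(x)/\phi_{\alpha,C_\pm}(x)$ is bounded, since $\phi_{\alpha,C_\pm}(x)\sim c|x|^{-\alpha-1}$ at infinity forces $x\phi'_{\alpha,C_\pm}/\phi_{\alpha,C_\pm}\to-(\alpha+1)$, so the heavy tail of $Z_t$ is absorbed by a corresponding decay in the Malliavin weight. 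Implementing this cancellation at the process level, uniformly in $\theta\in\tilde\Theta$ and for all small $t$, is the technical core of Section \ref{s3}.
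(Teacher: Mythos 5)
Your plan for part (1) has a genuine gap, and it is precisely the step you flag as the ``main obstacle'' and leave unimplemented. Moreover, the intermediate bounds you propose in (i) are not available in the generality of \textbf{H1}--\textbf{H2}: without the extra tail condition \eqref{add}, $\E|\dif^2_t X^\theta_t|^p$ and $\E|\delta_t(1)|^p$ may be infinite (e.g.\ for $\alpha$-stable $Z$ one has $\int_{|u|>1}|u|^3\mu(du)=\infty$, and for $\alpha\le 1$ even $\int_{|u|>1}|u|\mu(du)=\infty$, so the large-jump parts of $\dif^2_t X^\theta_t$ and $\delta_t(1)$ have no moments), and even in the light-tailed case a H\"older decoupling of numerator from denominator produces bounds that explode as $t\to 0$ --- this is exactly the phenomenon described in Remark \ref{r2}. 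The paper's proof never estimates $\delta_t(1)$, $\dif^2_t X^\theta_t$ or $\tilde Z_t$ separately; the whole point is a \emph{pathwise} pairing of large jumps in the numerator with the same large jumps in the denominator. Concretely: split at the level $t^{1/\alpha}$ (and $u_0$), write $\tilde Z_t=\xi_t+\zeta_t$, $\dif_t X^\theta_t=\gamma(\kappa_t+\eta_t)$, bound the small-jump ratio via exponential moments of $t^{-1/\alpha}\xi_t$ and the negative-moment Lemma \ref{l3} for $\kappa_t$ (proved by the elementary ``no jump in an annulus'' estimate, an alternative to your Laplace/Mellin route), and bound the large-jump ratio pathwise by Cauchy--Schwarz, $|\zeta_t|/\sqrt{\eta_t}\le\sqrt{N_t}$ with $N_t$ Poisson of bounded intensity $t\mu(|u|>t^{1/\alpha})$ (Lemma \ref{lem3}); the analogous argument with $\chi$ and the $\tau^{2+\delta}$-moment of \textbf{H2} gives Lemma \ref{lem4}, and the second-derivative term is killed pathwise by $|\dif^2_t X^\theta_t|/(\dif_t X^\theta_t)^{3/2}\le 2/\sqrt{\gamma}$ (superadditivity of $x\mapsto x^{3/2}$), see \eqref{est1}--\eqref{est31}. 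This pairing is the content of \eqref{un_xi_bound}; acknowledging that some cancellation ``must exist'' because the limiting score is bounded does not supply it.

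Part (2) as you sketch it is also not justified as stated: with $\rho(u)=u^2$ (or the flow $T^\eps(u)=u/(1-\eps u)$, which blows up in finite time and so does not define a global perturbation) the functional $\dif_t X^\theta_t$ need not be square integrable, so the ``weight'' is not a Skorokhod integral and the Girsanov/integration-by-parts identity $\nabla_\theta\E f(X^\theta_t)=\E f(X^\theta_t)\Xi^\theta_t$ cannot be invoked directly --- this is exactly why the paper speaks of a \emph{modified} Malliavin weight. The paper's proof of \eqref{modif_g} proceeds in three steps: establish the representation for compactly supported $\rho$ under the additional moment condition \eqref{add} (where the calculus of \cite{Kul_ivanenko} applies), then let $\rho_N\to u^2$ using the uniform integrability supplied by the part-(1)-type bounds, and finally remove \eqref{add} by approximating $m$ with the damped densities $m_L(u)=m(u)e^{-u^2/L}$ and passing to the limit again. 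Your direct computation skips all three approximation steps, and without them the expectations you manipulate are not known to be finite.
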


It follows from (\ref{modif_g}) that
$$
\tilde r(n)^\top g_t(\theta;X^\theta_{h_n})=\E\Big[\tilde r(n)^\top \Xi_{h_n}^\theta\Big|X^\theta_{h_n}\Big].
$$
Note that the explicit formula for the weight $\Xi^\theta_t$ does not involve the ``nuisance noise'' $U$ at all, and the dependence of $p_t(\theta;X_{h_n}^\theta)$ on $U$ is contained in the operation of the conditional expectation, only. Then by the Jensen inequality
$$
\E\Big|\tilde r(n)^\top g_t(\theta;X^\theta_{h_n})\Big|^{2+\delta_1}\leq \E\Big|\tilde r(n)^\top \Xi_{h_n}^\theta\Big|^{2+\delta_1},
$$
where $U$ is not involved in the right hand side term. Hence Theorem \ref{thm3} immediately yields both Proposition \ref{prop2} and the moment bound (\ref{lp_bound_ext}).

Let us explain the main idea which  Theorem \ref{thm3} is based on.  By analogy to  Gobet's results in the diffusive case (\cite{Gobe}, \cite{Gob02}), one can naturally expect that  an integral representation of the form (\ref{modif_g}) could be obtained by means of a proper version of the Malliavin calculus for jump processes. Two possible ways  to do that  were developed in \cite{CG15} and \cite{Kul_ivanenko}, being in fact close to each other and, heuristically, being based on ``infinitesimal perturbation of the jump configuration'' with an intensity function $\rho$ which is a ``compactly'' supported smooth function (see a more detailed exposition in Section \ref{s32} below). Using either of these two approaches it is possible to prove an analogue of (\ref{modif_g}) with $\Xi_t^\theta$ being replaced by some $\Xi_{t,\rho}^\theta$ which, in full analogy to Gobet's approach, has  the meaning of the \emph{Malliavin weight} (see formula (\ref{Xi}) below). However, we then encounter following two difficulties, both being related with moment bounds for the corresponding terms.

\begin{itemize}
  \item In order to provide that $\Xi_{t,\rho}^\gamma$  is square integrable (which is necessary for $\Xi^\theta_{t,\rho}$ to have representation
(\ref{Xi}) with the Skorokhod integral in the right hand side), we need an additional moment bound for $Z$: for some $\delta'>0$,
\be\label{add} \int_{|u|\geq
1}|u|^{2+\delta'}\mu(du)<\infty. \ee
This excludes from the consideration ``heavy tailed'' L\'evy processes, e.g. the particularly important $\alpha$-stable process.
  \item Even if we confine ourselves by the class of ``light tailed'' L\'evy  processes satisfying (\ref{add}), we can not obtain analogue of the moment bound (\ref{un_xi_bound}) for $\Xi_{t,\rho}^\theta$: namely,  respective upper bound for $L_{2+\delta_1}$-norm of $\Xi_{t,\rho}^\theta$ would explode as $t\to 0+$: see Remark \ref{r2} below for detail.
\end{itemize}

Both of these ``moment'' difficulties are resolved when we put formally
\begin{equation}
\rho(u)=u^2, \quad u\in \Re,
\nonumber
\end{equation}
in the formula for $\Xi_{t,\rho}^\theta$; see Section \ref{s32} below, and especially Remark \ref{r2} which explains the heuristics behind the particular choice $\rho(u)=u^2$. This explains both the name ``the modified Malliavin weight'' we have used for  the term defined by (\ref{modif_weight}), and the background for the notation $\dif_t, \delta_t$: we take the explicit formulae for the Malliavin derivative $\dif_{t,\rho}$ and respective Skorokhod integral $\delta_{t,\rho},$ and put therein $\rho(u)=u^2$.   Note that because $\rho(u)=u^2$ is not compactly supported,  $\dif_t X_t^\theta$ may fail to be square integrable, which means that now $\dif_t X_t^\theta$ can not be interpreted as a Malliavin derivative. As a consequence, now one can not apply the Malliavin calculus tools  to prove  (\ref{modif_g}) directly. Hence we will use the following  three step procedure to prove (\ref{modif_g}):
\begin{enumerate}
\item first, we apply Malliavin calculus tools to prove analogue of (\ref{modif_g}) for $\Xi_{t,\rho}^\theta$ with compactly supported $\rho$ under the additional moment  condition (\ref{add});
\item second, we approximate $\rho(u)=u^2$ by a sequence of compactly supported $\rho$'s;
\item finally, we approximate general $Z$ by a sequence of ``light tailed'' L\'evy processes $Z^L, L\geq 1$, each of them satisfying (\ref{add}).
\end{enumerate}

\subsection{Proof of (\ref{un_xi_bound}): Moment bound}\label{s32}

First, we give an explicit expression for $\tilde r(n)^\top \Xi_t^\theta$. Denote
$$
\tilde Z_t=Z_t+c_t=\int_0^t\int_{|u|>t^{1/\alpha}}u\nu(d s, d u)+ \int_0^t\int_{|u|\leq
t^{1/\alpha}}u\tilde\nu(d s, d u),
$$
then
\be\label{explicit}
\tilde r(n)^\top \Xi_t^\theta= \left( {t^{1/\alpha} \delta_{t}(1)\over \dif_{t } X^{\theta}_t}+{t^{1/\alpha}
\dif^2_{t} X^{\theta}_t \over (\dif_{t}
X^{\theta}_t)^2} ,
                      {\tilde Z_t \delta_{t}(1)\over
\dif_{t} X^{\theta}_t}+{\tilde Z_t \dif^2_{t} X^{\theta}_t \over (\dif_{t}
X^{\theta}_t)^2}-{1\over \gamma}
                                                              \right)^\top.
\ee
 We will conclude the required bound (\ref{un_xi_bound}) from a sequence of auxiliary estimates
for the terms involved in the explicit expression (\ref{explicit}).

  Denote
 $$\kappa_t=\int_0^t\int_{|u|\leq t^{1/\alpha}}u^2\nu(ds,du).
 $$
\begin{lem}\label{l3} For every $p\geq 1$
there exists $C_p<\infty$ such that
$$
\E(t^{-2/\alpha}\kappa_t)^{-p}\leq C_p, \quad t\in (0,
1].
$$
\end{lem}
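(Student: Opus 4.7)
My plan is to control the negative moments via the Laplace transform together with the Gamma-function identity
$$
\E \big(t^{-2/\alpha}\kappa_t\big)^{-p} = \frac{1}{\Gamma(p)}\int_0^\infty \lambda^{p-1}\,\E \exp\!\big(-\lambda t^{-2/\alpha}\kappa_t\big)\,d\lambda.
$$
The task then reduces to showing that $\E \exp(-\lambda t^{-2/\alpha}\kappa_t)$ admits an upper bound which is uniform in $t\in(0,1]$ and integrable against $\lambda^{p-1}d\lambda$. Since $\kappa_t$ is a non-negative Poisson integral, the exponential formula for Poisson functionals yields
$$
\E \exp\!\big(-\lambda t^{-2/\alpha}\kappa_t\big) = \exp\!\Bigg(-t\int_{|u|\le t^{1/\alpha}}\!\!\big(1-e^{-\lambda t^{-2/\alpha}u^2}\big)m(u)\,du\Bigg),
$$
so everything reduces to a lower bound on the exponent on the right-hand side.

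Under \textbf{H1} we have $C_++C_->0$; I may assume $C_+>0$ (the opposite case is symmetric), which produces $c_0,r_0>0$ with $m(u)\ge c_0 u^{-\alpha-1}$ for $u\in(0,r_0]$. For $t\in(0,t_0]$ with $t_0:=r_0^\alpha\wedge 1$ I will restrict the integral to $u\in(0,t^{1/\alpha}]$ and change variables $u=t^{1/\alpha}v$; the prefactor $t\cdot t^{1/\alpha}\cdot t^{-(\alpha+1)/\alpha}$ collapses to $1$, giving
$$
t\int_{|u|\le t^{1/\alpha}}\!\!\big(1-e^{-\lambda t^{-2/\alpha}u^2}\big)m(u)\,du\; \ge\; c_0\int_0^1\big(1-e^{-\lambda v^2}\big)v^{-\alpha-1}\,dv\;=:\;c_0\,J(\lambda),
$$
so $\E\exp(-\lambda t^{-2/\alpha}\kappa_t)\le \exp(-c_0 J(\lambda))$ for every $t\in(0,t_0]$, with the $t$-dependence entirely absorbed into the choice of scaling $\lambda t^{-2/\alpha}$.

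The behaviour of the elementary function $J$ is straightforward: Taylor expansion gives $J(\lambda)\asymp \lambda$ as $\lambda\to 0+$ (the exponent $1-\alpha>-1$ gives integrability at the origin), while the rescaling $w=\sqrt{\lambda}\,v$ yields $J(\lambda)\asymp \lambda^{\alpha/2}$ as $\lambda\to\infty$, since $\int_0^\infty(1-e^{-w^2})w^{-\alpha-1}dw<\infty$ for $\alpha\in(0,2)$. Hence $\int_0^\infty \lambda^{p-1}e^{-c_0 J(\lambda)}d\lambda<\infty$ for every $p\ge 1$, proving the bound on $(0,t_0]$. For $t\in[t_0,1]$ I will use a path-wise monotonicity argument: both the time interval $[0,t]$ and the truncation region $\{|u|\le t^{1/\alpha}\}$ grow with $t$, so $\kappa_t\ge \kappa_{t_0}$ almost surely, and since $t^{-2/\alpha}\ge 1$ on $(0,1]$ this yields $t^{-2/\alpha}\kappa_t\ge \kappa_{t_0}=t_0^{2/\alpha}\big(t_0^{-2/\alpha}\kappa_{t_0}\big)$, reducing this range to the previous case. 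The only real technical point I anticipate is locating the Laplace-scaling $\lambda t^{-2/\alpha}$ that renders the exponent $t$-free; the asymmetric case of \textbf{H1} costs nothing since we simply restrict the integration to whichever half-line carries the positive constant.
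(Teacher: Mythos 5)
Your argument is correct, but it takes a different route from the paper. You bound negative moments through the identity $\E X^{-p}=\Gamma(p)^{-1}\int_0^\infty\lambda^{p-1}\E e^{-\lambda X}\,d\lambda$ and then compute the Laplace transform of the Poisson functional $\kappa_t$ exactly via the exponential formula, using the scaling $u=t^{1/\alpha}v$ (under which the prefactor indeed collapses to $1$) and the one-sided lower bound $m(u)\ge c_0u^{-\alpha-1}$ from \textbf{H1} to obtain the $t$-free estimate $\E\exp(-\lambda t^{-2/\alpha}\kappa_t)\le e^{-c_0J(\lambda)}$ with $J(\lambda)\asymp\lambda$ at $0$ and $\asymp\lambda^{\alpha/2}$ at infinity; the monotonicity reduction for $t\in[t_0,1]$ is also fine. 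The paper instead proves a uniform small-ball estimate directly: if $\kappa_t<\eps^2t^{2/\alpha}$ then the point measure $\nu$ has no atoms in $[0,t]\times\{\eps t^{1/\alpha}\le|u|\le t^{1/\alpha}\}$, whence
$$
\P\bigl(t^{-2/\alpha}\kappa_t<\eps^2\bigr)\le\exp\bigl\{-t\,\mu(|u|\in[\eps t^{1/\alpha},t^{1/\alpha}])\bigr\}\le e^{-C\eps^{-\alpha}+C},
$$
and all negative moments follow from this tail bound. The paper's argument is shorter and more elementary (a single event inclusion plus \textbf{H1}), while yours yields a stronger intermediate statement — a uniform stretched-exponential bound $e^{-c\lambda^{\alpha/2}}$ on the Laplace transform, making the $\alpha/2$-stable scaling of $t^{-2/\alpha}\kappa_t$ explicit — at the cost of invoking the exponential formula and an extra change of variables. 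Both establish the lemma in full.
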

\begin{proof} For any $\eps<1$ we have
$$
\P(\kappa_t<\eps^{2} t^{2/\alpha})\leq P\Big(\nu([0,t]\times \{|u|\in
[\eps t^{1/\alpha}, t^{1/\alpha}]\})=0\Big)=\exp\Big\{-t\mu(|u|\in [\eps
t^{1/\alpha}, t^{1/\alpha}])\Big\}.
$$
Condition \textbf{H1} yields that, with some positive
constant $C$,
$$
t\mu(|u|\in [\eps t^{1/\alpha}, t^{1/\alpha}])\geq C t\int_{\eps
t^{1/\alpha}}^{t^{1/\alpha}} \alpha
|u|^{-\alpha-1}du=C(\eps^{-\alpha}-1), \quad t\in (0, 1].
$$
Hence for the family of random variables
$t^{-2/\alpha}\kappa_t, t\in (0,1]$ we have the uniform bound
$$
\P(t^{-2/\alpha}\kappa_t<\eps^{2})\leq e^{-C\eps^{-\alpha}+C}, \quad \eps<1, \quad
t\leq 1,
$$
which proves the required statement.
\end{proof}

Because
$$
\dif X^\theta_t=\gamma \int_0^t\int_{\Re}u^2\nu(ds,du)\geq \gamma \kappa_t,
$$
Lemma \ref{l3} immediately gives the following: for every $p\geq 1$,
\be\label{est1}
\sup_{\theta\in\tilde \Theta, t\in (0,1]}\E\left(t^{1/\alpha}\over \sqrt{\dif_t X^\theta_t}\right)^p<\infty.
\ee
Next, observe that both $\dif_t X^\theta_t$ and $\dif_t^2 X^\theta_t$  are represented as sums over the set of jumps of the process $Z$. Because
$$
\left(\sum_ia_i\right)^{3/2}\geq \sum_ia_i^{3/2}, \quad \{a_i\}\subset [0, \infty),
$$
we have
\be\label{est2}
\left|{\dif_t^2 X^\theta_t\over (\dif_t X^\theta_t)^{3/2}}\right|\leq {2\over \sqrt{\gamma}}.
\ee

\begin{lem}\label{lem3} For every $p\geq 1$,
\be\label{est4}
\sup_{\theta\in\tilde \Theta, t\in (0,1]}\E\left(\tilde Z_t\over \sqrt{\dif_t X^\theta_t}\right)^p<\infty.
\ee
\end{lem}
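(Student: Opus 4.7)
The approach is to split $\tilde Z_t$ and $\dif_tX_t^\theta$ at the natural cutoff $|u|=t^{1/\alpha}$ and treat the small-jump and large-jump contributions by different means. Write
\[
\tilde Z_t^{(s)}:=\int_0^t\!\!\int_{|u|\le t^{1/\alpha}}u\,\tilde\nu(ds,du),\ \ \tilde Z_t^{(l)}:=\int_0^t\!\!\int_{|u|>t^{1/\alpha}}u\,\nu(ds,du),\ \ \kappa_t^{(l)}:=\int_0^t\!\!\int_{|u|>t^{1/\alpha}}u^2\,\nu(ds,du),
\]
so that $\tilde Z_t=\tilde Z_t^{(s)}+\tilde Z_t^{(l)}$ and $\dif_tX_t^\theta=\gamma(\kappa_t+\kappa_t^{(l)})\ge\gamma(\kappa_t\vee\kappa_t^{(l)})$. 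Since $\gamma$ is bounded below on $\tilde\Theta$, the $\theta$-dependence reduces to a harmless constant, and $|a+b|^p\le 2^{p-1}(|a|^p+|b|^p)$ reduces the problem to bounding $\E(|\tilde Z_t^{(s)}|/\sqrt{\kappa_t})^p$ and $\E(|\tilde Z_t^{(l)}|/\sqrt{\kappa_t^{(l)}})^p$ (with the convention $0/0=0$) uniformly in $t\in(0,1]$.

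The large-jump term admits a purely pathwise bound. Let $N_t=\nu([0,t]\times\{|u|>t^{1/\alpha}\})$ count the large jumps. Under \textbf{H1}, $\E N_t=t\mu(|u|>t^{1/\alpha})$ stays bounded as $t\downarrow 0$ because the singularity $|u|^{-\alpha-1}$ near the origin gives $\mu(|u|>t^{1/\alpha})=O(t^{-1})$, which is cancelled by the prefactor $t$; hence $N_t$ has uniformly bounded moments of every order. On $\{N_t\ge 1\}$, Cauchy--Schwarz applied to $\tilde Z_t^{(l)}=\sum_{j=1}^{N_t}u_j$ yields $(\tilde Z_t^{(l)})^2\le N_t\,\kappa_t^{(l)}$, hence $|\tilde Z_t^{(l)}|/\sqrt{\kappa_t^{(l)}}\le\sqrt{N_t}$, and taking $p$-th expectations gives the required bound.

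For the small-jump ratio I would use H\"older:
\[
\E\left(\frac{|\tilde Z_t^{(s)}|}{\sqrt{\kappa_t}}\right)^p\le\bigl(\E|\tilde Z_t^{(s)}|^{2p}\bigr)^{1/2}\bigl(\E\kappa_t^{-p}\bigr)^{1/2}.
\]
The negative-moment factor is $O(t^{-2p/\alpha})$ by Lemma \ref{l3}. For the positive one, Kunita's inequality for the purely discontinuous martingale $\tilde Z^{(s)}$, which has jumps bounded by $t^{1/\alpha}$, expresses $\E|\tilde Z_t^{(s)}|^{2p}$ in terms of the deterministic integrals $\int_0^t\!\!\int_{|u|\le t^{1/\alpha}} u^2\,ds\,\mu(du)$ and $\int_0^t\!\!\int_{|u|\le t^{1/\alpha}}|u|^{2p}\,ds\,\mu(du)$; using the asymptotics of $m$ in \textbf{H1} together with $2p>\alpha$ (automatic for $p\ge 1$ since $\alpha<2$), both are $O(t^{2p/\alpha})$. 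Multiplying the two H\"older factors gives a quantity of order $O(1)$.

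The main obstacle, and the reason the proof must proceed via the small/large decomposition, is that a direct H\"older split on $\E(|\tilde Z_t|/\sqrt{\dif_tX_t^\theta})^p$ is hopeless once $p\ge\alpha$: $\tilde Z_t$ scales like $t^{1/\alpha}Z_1^{\alpha,C_\pm}$, whose moments $\E|Z_1^{\alpha,C_\pm}|^q$ are finite only for $q<\alpha$, so any factor of the form $\E|\tilde Z_t|^{pq}$ with $pq\ge\alpha$ is infinite. The decomposition sidesteps this entirely: the large-jump part is controlled not through its own moments but pathwise against $\kappa_t^{(l)}$ together with the well-behaved Poisson count $N_t$, while the small-jump part, being a bounded-jump martingale, enjoys all moments with the correct $t^{1/\alpha}$-scale.
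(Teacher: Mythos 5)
Your proof is correct and follows essentially the same route as the paper: the same split of $\tilde Z_t$ and $\dif_t X_t^\theta$ at the cutoff $|u|=t^{1/\alpha}$, the pathwise Cauchy--Schwarz bound $|\tilde Z_t^{(l)}|/\sqrt{\kappa_t^{(l)}}\le\sqrt{N_t}$ with $N_t$ Poisson of bounded intensity, and the Cauchy--Schwarz pairing of the small-jump moments against the negative moments of $\kappa_t$ from Lemma \ref{l3}. The only (harmless) difference is that you bound $\E|\tilde Z_t^{(s)}|^{2p}=O(t^{2p/\alpha})$ via a Kunita/Rosenthal-type inequality, whereas the paper gets the same scaling from the explicit exponential moment bound $\E\exp(\pm t^{-1/\alpha}\tilde Z_t^{(s)})\le C$.
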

\begin{proof} We have
$$
\tilde Z_t=\int_0^t\int_{|u|\leq t^{1/\alpha}}u\tilde \nu
(ds,du)+\int_0^t\int_{|u|>t^{1/\alpha}}u\nu (ds,
du)=:\xi_t+\zeta_t,
$$
$$
\dif X^\theta_t=\gamma\int_0^t\int_{\Re}u^2\, \nu
(ds,du)=\gamma\left(\kappa_t+\eta_t\right), \quad \eta_t=
\int_0^t\int_{|u|>t^{1/\alpha}}u^2\, \nu (ds,du)
$$
($\kappa_t$ is already defined above).
Then
\be\label{11}
\left|{\tilde Z_t \over \sqrt{\dif X^\theta_t}}\right|\leq
\frac{1}{\gamma}\left({|\xi_t|\over
\sqrt{\kappa_t+\eta_t}}+{|\zeta_t|\over \sqrt{\kappa_t+\eta_t}}\right)\leq \frac{1}{\gamma}\left({|\xi_t|\over
\sqrt{\kappa_t}}+{|\zeta_t|\over \sqrt{\eta_t}}\right).
\ee

By Lemma \ref{l3},  the family
$$ \left\{{t^{1/\alpha}\over \sqrt{\kappa_t}}\right\}_{t\in (0,1]}
$$ has bounded $L_p$-norms for any $p\geq 1$. In addition, the family
$$ \left\{t^{-1/\alpha}
\xi_t\right\}_{t\in [0,1]}
$$
also has bounded $L_p$-norms for any $p\geq 1$. To see this, observe  that $\xi_t$ is an integral of a deterministic function over a compensated Poisson point measure, and therefore its exponential moments can be expressed explicitly:
$$
\E \exp(c \xi_t)=\exp\Big[t\int_{|u|\leq
t^{1/\alpha}}(e^{cu}-1-cu)\, \mu(du)\Big].
$$
Taking  $c=\pm t^{-1/\alpha}$ and using \textbf{H1}, we get
$$
\E \exp\Big(\pm t^{-1/\alpha}\xi_t\Big)\leq
\exp\Big[C_1t\int_{|u|\leq t^{1/\alpha}}(t^{-1/\alpha} u)^{2}
\mu(du)\Big]\leq C_2,
$$
which yields the required $L_p$-bounds. Applying the Cauchy inequality, we get finally that the family
$$ \left\{{|\xi_t|\over  \sqrt{\kappa_t}}\right\}_{t\in (0,1]}$$
has bounded $L_p$-norms.

For the second summand in the right hand side of (\ref{11}), we write the Cauchy inequality:
$$
{|\zeta_t|\over \sqrt{\eta_t}}\leq \sqrt{N_t}, \quad
N_t=\nu([0,t]\times\{|u|>t^{1/\alpha}\}).
$$
Observe that $N_t$ has a Poisson law with the intensity
$$
t\mu(|u|>t^{1/\alpha}), \quad t\in(0,1],
$$
which is bounded because of \textbf{H1.} Hence the family
$$ \left\{{|\zeta_t|\over \sqrt{\eta_t}}\right\}_{t\in [0,1]}$$
also has bounded $L_p$-norms, which  completes the proof of (\ref{est4}).\end{proof}

\begin{lem}\label{lem4}  Let $\delta$ be the same as in \textbf{H2}. Then
\be\label{est31}
\sup_{\theta\in\tilde \Theta, t\in (0,1]}\E\left(\delta_t(1)\over \sqrt{\dif_t X^\theta_t}\right)^{2+\delta}<\infty.
\ee
\end{lem}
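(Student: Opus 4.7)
The plan is to split $\delta_t(1)$ into three natural pieces and bound the contribution of each to $\delta_t(1)/\sqrt{\dif_t X_t^\theta}$ separately in $L_{2+\delta_1}$ for any $\delta_1\in(0,\delta)$. Write $\delta_t(1) = M_t + L_t + D_t$, where
\[
M_t:=\int_0^t\int_{|u|\le u_0}\chi(u)\,\tilde\nu(ds,du), \quad L_t:=\int_0^t\int_{|u|>u_0}\chi(u)\,\nu(ds,du), \quad D_t:=tu_0^2\bigl[m(u_0)-m(-u_0)\bigr].
\]
The first is a purely discontinuous martingale, the second is a finite sum $\sum_i \chi(J_i)$ over the jumps $J_i$ of $Z$ in $[0,t]$ with $|J_i|>u_0$, and the third is deterministic.

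For $M_t$, \textbf{H1} together with the boundedness of $\tau$ near the origin from \textbf{H2} yields $|\chi(u)|\le C|u|$ on $\{|u|\le u_0\}$, whence \emph{pathwise}
\[
[M,M]_t = \int_0^t\int_{|u|\le u_0}\chi(u)^2\,\nu(ds,du) \le C^2\int_0^t\int_{|u|\le u_0} u^2\,\nu(ds,du) \le \frac{C^2}{\gamma}\dif_t X_t^\theta.
\]
The plan is to apply the Kunita--BDG inequality to get $\E|M_t|^{2+\delta_1}\lesssim\E[M,M]_t^{(2+\delta_1)/2}$, then use H\"older's inequality together with the inverse-moment bound of Lemma \ref{l3} for $\kappa_t$, with exponents chosen so that the positive power of $t$ arising from $\E[M,M]_t^{(2+\delta_1)/2}$ exactly cancels the negative power of $t$ from $\E\kappa_t^{-(2+\delta_1)/2}$.

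For $L_t$, when at least one large jump occurs in $[0,t]$ we have $\dif_t X_t^\theta\ge\gamma\sum_i J_i^2$; using $|\chi(u)|\le|u|(\tau(u)+2)$ on $\{|u|>u_0\}$ and Cauchy--Schwarz gives
\[
\frac{|L_t|}{\sqrt{\dif_t X_t^\theta}} \le \gamma^{-1/2}\sqrt{\textstyle\sum_i (\tau(J_i)+2)^2},
\]
while $L_t=0$ otherwise. Since the number of large jumps is Poisson with intensity bounded by $\mu(\{|u|>u_0\})$ for $t\in(0,1]$, a standard compound-Poisson moment formula reduces the required bound to finiteness of $\int_{|u|>u_0}(\tau(u)+2)^{2+\delta_1}\mu(du)$, which follows from \textbf{H2} by interpolation. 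Finally, the crude estimate $|D_t|\le Ct$ combined with $\dif_t X_t^\theta\ge\gamma\kappa_t$ and Lemma \ref{l3} provides the required control on $\E|D_t/\sqrt{\dif_t X_t^\theta}|^{2+\delta_1}$, sharpened if necessary by incorporating the mandatory contribution of jumps on the stable scale to the denominator.

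The main technical obstacle I foresee is the martingale piece $M_t$: although $[M,M]_t$ is pathwise dominated by $C\gamma^{-1}\dif_t X_t^\theta$, this does not by itself yield an $L_p$-bound for the ratio $M_t/\sqrt{\dif_t X_t^\theta}$, since BDG only controls $\E[M,M]_t^{(2+\delta_1)/2}$ and not the ratio. The H\"older balancing that makes this work depends precisely on the locally $\alpha$-stable structure in \textbf{H1}, via which Lemma \ref{l3} delivers inverse moments of $\kappa_t$ at exactly the scale $t^{-2/\alpha}$ required to compensate the growth of $\E[M,M]_t^{(2+\delta_1)/2}$.
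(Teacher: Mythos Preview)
Your treatment of the large-jump piece $L_t$ is essentially the paper's argument, and it works. The difficulty is that your fixed threshold $u_0$ is the wrong place to split; the paper splits instead at the \emph{time-dependent} scale $t^{1/\alpha}$, and this is not cosmetic.

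For $M_t$, the balancing you describe does not occur. The quadratic variation $[M,M]_t=\int_0^t\int_{|u|\le u_0}\chi(u)^2\,\nu(ds,du)$ picks up the ``medium'' jumps with $t^{1/\alpha}<|u|\le u_0$; since such a jump contributes a term of size $\asymp u_0^2$ and occurs with probability $\asymp t$, one has $\E[M,M]_t^r\asymp t$ for every $r\ge 1$, not $t^{2r/\alpha}$. After H\"older with exponents $a,b$ you obtain a bound of order $t^{1/a}\cdot t^{-(2+\delta_1)/\alpha}$, and since $(2+\delta_1)/\alpha>1$ this would force $a<1$, which is impossible. More fundamentally, BDG controls $\E|M_t|^p$ only, and no H\"older split of $\E\bigl(|M_t|^{2+\delta_1}\kappa_t^{-(2+\delta_1)/2}\bigr)$ can exploit the pathwise domination $[M,M]_t\le C\gamma^{-1}\dif_t X_t^\theta$. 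The same obstruction hits $D_t$ when $\alpha<1$: on the event that no jump of size exceeding $t^{1/\alpha}$ occurs (which has probability bounded below), $\dif_t X_t^\theta\asymp t^{2/\alpha}$ while $D_t\asymp t$, so $D_t/\sqrt{\dif_t X_t^\theta}\asymp t^{1-1/\alpha}\to\infty$.

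The paper's remedy is to move the threshold to $t^{1/\alpha}$, writing $\delta_t(1)=\hat\xi_t+\hat\zeta_t+\varpi_t$ with $\hat\xi_t$ the compensated integral over $\{|u|\le t^{1/\alpha}\}$, $\hat\zeta_t$ the uncompensated integral over $\{|u|>t^{1/\alpha}\}$, and $\varpi_t=t^{1+2/\alpha}[m(t^{1/\alpha})-m(-t^{1/\alpha})]$. Now (i) $t^{-1/\alpha}\hat\xi_t$ has uniformly bounded \emph{exponential} moments (compute $\E\exp(\pm t^{-1/\alpha}\hat\xi_t)$ directly using $|\chi(u)|\le C|u|$ and \textbf{H1}), so $\hat\xi_t/\sqrt{\kappa_t}$ is bounded in every $L_p$ via the product $(t^{-1/\alpha}\hat\xi_t)\cdot(t^{1/\alpha}/\sqrt{\kappa_t})$ and Lemma~\ref{l3}; (ii) \textbf{H1} gives $\varpi_t\asymp t^{1/\alpha}$, which matches the scale of $\sqrt{\kappa_t}$; and (iii) the jumps above $t^{1/\alpha}$ now appear both in $\hat\zeta_t$ and in the denominator via $\eta_t=\int_0^t\int_{|u|>t^{1/\alpha}}u^2\,\nu$, so Cauchy--Schwarz yields $|\hat\zeta_t|/\sqrt{\dif_t X_t^\theta}\le\gamma^{-1/2}\sqrt{J_t}$ with $J_t=\int_0^t\int_{|u|>t^{1/\alpha}}(\chi(u)/u)^2\,\nu$, after which your argument for $L_t$ (split $J_t$ at $u_0$, Poisson count plus \textbf{H2}) finishes the job.
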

\begin{proof}

The proof is similar to the previous one, but some additional technicalities arise
because now we have $\chi(u)$ instead of $u$ under the integrals in the numerator. We have
$$\ba
 \delta_t(1)&=\int_0^t\int_{|u|\leq t^{1/\alpha}}\chi(u)\tilde \nu
(ds,du)+\int_0^t\int_{|u|>t^{1/\alpha}}\chi(u)\nu (ds,
du)+t^{1+2/\alpha}\Big[m(t^{1/\alpha})-m(-t^{1/\alpha})\Big]
\\&=:\hat\xi_t+\hat\zeta_t+\varpi_t.
\ea
$$
Denote $t_0=(u_0)^\alpha$ with $u_{0}$ coming from \textbf{H2}, then the ratio
$$
\upsilon(u):={\chi(u)\over u}=-{um'(u)\over m(u)}-2
$$
is bounded on the set $\{|u|\leq t^{1/\alpha}\}\subset \{|u|\leq u_0\}$. Then the same argument as we have used before shows that the family
$$ \left\{{|\hat\xi_t|\over  \sqrt{\kappa_t}}\right\}_{t\in (0,t_0]}$$
has bounded $L_p$-norms for every $p\geq 1$.

Next,  we have by \textbf{H1} that
$$
\varpi_t\sim(C_+-C_-)t^{1/\alpha}, \quad t\to 0+,
$$
and thus
$$
|\varpi_t|\leq C t^{1/\alpha}, \quad t\in (0,1].
$$
Then by Lemma \ref{l3} the family
$$ \left\{{\varpi_t \over  \sqrt{\kappa_t}}\right\}_{t\in (0,t_0]}$$
has bounded $L_p$-norms for every $p\geq 1$.

Finally, for $t\leq t_0$ by the Cauchy inequality we have
$$
|\hat \zeta_t|\leq \sqrt{\eta_t}\sqrt{J_t},
$$
where
$$
\begin{aligned}
J_t&=\int_0^t\int_{|u|>t^{1/\alpha}}\upsilon^2(u)\nu(ds,du)\\&=
\int_0^t\int_{t^{1/\alpha}< t\leq u_0}\upsilon^2(u)\nu(ds,du)+\int_0^t\int_{|u|>u_0}\upsilon^2(u)\nu(ds,du)=:J_t^1+J_t^2.
\end{aligned}
$$
The function $\upsilon(u)$ is bounded on $\{|u|\leq u_0\}$, hence
$$
J_t^1\leq CN_t, \quad t\in (0, t_0],
$$
where $N_t$ is the same as in the  proof of Lemma \ref{lem3}. Hence $J_t^1, t\leq t_0$ have bounded $L_p$-norms for every $p\geq 1$. The random variable $J_t^2$ has  a compound Poisson distribution with the intensity of the Poisson random variable equal  $t\mu(|u|> u_0)$, and the law of a single jump equal to the image under $\upsilon$ of the measure $\mu$ conditioned to $\{|u|>u_0\}$. By condition \textbf{H2}, this law have a finite moment of the order $2+\delta$, therefore the variables $J_t^2, t\leq t_0$ have bounded $L_{2+\delta}$-norms. Summarizing all the above, we get
$$
\sup_{\theta\in\tilde \Theta, t\in (0,t_0]}\E\left(\delta_t(1)\over \sqrt{\dif_t X^\theta_t}\right)^{2+\delta}<\infty.
$$
The same bound for $t\in [t_0, 1]$ can be proved in a similar and simpler way; in that case instead of taking the integrals with respect to $\{|u|\leq t^{1/\alpha}\}, \{|u|> t^{1/\alpha}\}$, one should consider, both in the numerator and the denominator, the integrals with respect to $\{|u|\leq u_0\}, \{|u|> u_0\}$.
\end{proof}

Now we deduce (\ref{un_xi_bound}) by simply applying the H\"older inequality to \eqref{explicit}
with the estimates (\ref{est1})--(\ref{est4}) and (\ref{est31}).

\begin{rem}\label{r2} Now we can explain the main idea, which the choice of the intensity function $\rho(u)=u^2$ is based on.
When $\rho$ is compactly supported as was in \cite{Kul_ivanenko_2}, the ``large jumps'' are excluded from the formula for $\dif_{t, \rho} X_t^\theta$. On the other hand, ``large jumps'' are involved e.g. into $\tilde Z_t$, which will appear in the numerator in one term in (\ref{Xi}). We have  $\rho(u)=0, |u|>u_*$ for some $u_*>0$, and hence  the integrals
$$
\int_0^t\int_{|u|> u_*}u \nu(ds, du), \quad \int_0^t\int_{\Re}\rho(u) \nu(ds, du)
$$
are independent. In addition, we know that
$$
\E\left(\int_0^t\int_{|u|> u_*}u \nu(ds, du)\right)^2=t\int_{|u|> u_*}u^2\mu(du)+t^2\left(\int_{|u|> u_*}u^2\mu(du)\right)^2,
$$
$$
t^{-2/\alpha}\int_0^t\int_{\Re}\rho(u) \nu(ds, du)\Rightarrow \zeta, \quad t\to 0,
$$
where $\zeta$ is a positive $(\alpha/2)$-stable variable. Using this, it is easy to deduce a lower bound
$$
\E\left(\int_0^t\int_{|u|> u_0}u \nu(ds, du)\over \sqrt{\dif_{t, \rho} X_t^\theta}\right)^2\geq Ct^{-2/\alpha+1},
$$
which is unbounded for small $t$ because $\alpha<2$. This indicates that, in the present high-frequency sampling setting,
one can hardly expect to get the \emph{uniform} moment bound of the type (\ref{un_xi_bound}) for a Malliavin weight
which corresponds to a compactly supported $\rho$. Nevertheless,
in the modified construction we ``extend the support'' of $\rho$; this brings a ``large jumps'' part to the denominator, which provides a good
balance to respective parts which appear in the numerator, and this is the reason why the modified weight satisfies the required uniform moment bounds.
\end{rem}

\begin{rem}\label{r3} Another natural possibility to design the Malliavin weight  is to take into account the local scale for the  process $Z$ and to make the function $\rho$  depend on $t$ in the following way:
$$
\rho(u)=\rho_t(u)=u^2\varsigma(t^{-1/\alpha}u)
$$
with $\varsigma\in C^1$ such that $\varsigma(u)=1, |u|\leq 1$ and $\varsigma(u)=0, |u|\geq 2$.  Actually, the ``scaled'' choice of $\rho=\rho_t$ with the size of its  support $\asymp t^{1/\alpha}$  is essentially the one  used in the Malliavin calculus construction developed in \cite{CG15}.
 It is easy to see that, under such a choice,  an analogue of (\ref{est31}) would hold true; the reason  is that now $\delta_t$ would contain  only ``small jumps part'', which is well balanced with $\sqrt{\dif X_t^\theta}$ in completely the same way we have seen in the proof of Lemma \ref{lem3}. This would give the uniform moment bound for the \emph{first} component of the respective Malliavin weight, and hence the ``scaled'' choice of $\rho=\rho_t$ is appropriate when the unknown parameter  is involved into the drift term, only. However, the model with the parameter involved into the jump term this choice does not seem to be appropriate by the reason stated in Remark \ref{r2}: the ``large jump part'' of $\tilde Z_t$ is not well balanced by the ``small jump part'' $\sqrt{\kappa_t}$ of $\sqrt{\dif X_t^\theta}$. 
In regard to this point, for the sake of reference let us discuss \cite{CG15} in a little bit more detail. 
Therein the authors proved the drift-parameter LAMN property
when observing a sample $(X_{j/n})_{j=0}^{n}$ from the process $X$ of the form
\begin{equation}
X_{t} = x_{0} + \int_{0}^{t}b(X_{s},\beta)ds + Z_{t}.
\nonumber
\end{equation}
Their technical assumptions are: (i) the boundedness of the {\lm} of $Z$, which is assumed to be locally $\al$-stable in a neighborhood of the origin;
(ii) the smoothness and boundedness of $(x,\beta)\mapsto b(x,\beta)$; and that (iii) the stable-like index $\al\in(1,2)$,
imposed just for ignoring the presence of the drift term in looking at the ``scaled'' increment
$n^{-1/\al}(X_{j/n}-X_{(j-1)/n})$, which is to be close to $L_{j/n}-L_{(j-1)/n}$.
In particular, the boundedness (ii) seems essential in their proof, as well as the fact that only the drift parameter is the subject to statistical estimation. We emphasize that because of absence of the jump-related parameter $\gamma$, the result of \cite{CG15} is not comparable neither with our current results nor even with those from the aforementioned paper \cite{AJ07}. We expect that our proof technique based on the modified Malliavin weight combined with the approximation of the intensity function $\rho$
will be workable for the general case of state-dependent coefficients including the jump coefficient which contains a parameter $\gamma$; this is a subject of a further research.
\end{rem}

\subsection{Proof of (\ref{modif_g}): Integral representation}\label{s33}

The proof consists of three steps outlined at the end of Section \ref{s31}. The first step is based on a version  of the  Malliavin calculus on a space of trajectories of a L\'evy process, which we outline below and which is essentially developed in  \cite{Kul_ivanenko}. Note that the  Malliavin calculus for L\'evy noises is a classical and well developed tool, which dates back to \cite{Bichteler_Grav_Jacod} and \cite{Bismut_jumps}. However, we found it difficult to apply existing technique directly for our purposes: the reason is that unlike in the classical approach developed in \cite{Bichteler_Grav_Jacod} and \cite{Bismut_jumps}, we are interested not in the distribution density $p_t(\theta; x,y)$ itself (which is typically  treated by means of the inverse Fourier transform), but in the ratio $\prt_\theta p_t(\theta; x,y)/p_t(\theta; x,y)$. To get the $L_p$  bounds for this ratio, and especially to make approximating procedures outlined at the end of Section \ref{s31}, we need to have an integral representation for this ratio in a most simple possible form. For that purpose mainly, and also to make the exposition self-consistent, we introduce a  specially designed simple version of the Malliavin calculus. This version of course is neither  a substantial novelty nor is unique possible one; see e.g. the construction in \cite{CG15} aimed at similar purposes.

 Let the L\'evy measure $\mu$ of $Z$ satisfy  \textbf{H1}, \textbf{H2} and assume additionally that $Z$ is ``light tailed'' in the sense that (\ref{add}) holds true for some $\delta'>0$. Fix some function $\rho\in C^2$ such that  $\rho(u)=u^2$ in a neighborhood  of the point $u=0$. Consider a flow $Q_c, c\in \Re$ of transformations of $\Re$, which satisfies
$$
{d\over dc}Q_c(u)=\rho(Q_c(u)),\quad Q_0(u)=u,
$$
and for a fixed $t>0$ define respective family ${\mathcal Q}_c^t, c\in \Re$ of transformations of the process $Z_s, s\geq 0$ by the following convention:
the process ${\mathcal Q}_c^tZ$ has jumps at the same time instants with the initial process $Z$;  if the process has a jump with the amplitude $u$ then at the time moment $s$, respective jump of $Z$ has the amplitude equal either  $Q_c(u)$ or $u$ if $s\leq t$ or $s>t$, respectively. It is proved in \cite{Kul_ivanenko}, Proposition 1, that under conditions  \textbf{H1}, \textbf{H2} the law of ${\mathcal Q}_c^tZ$ in $\mathbb{D}(0, \infty)$ is absolutely continuous with respect to the law of $Z$. Hence every transformation ${\mathcal Q}_c^t$ can be naturally extended to a transformation of the  space $L_0(\Omega, \sigma(Z), \P)$ of the functionals of the process $Z$. Denote this transformation by the same symbol ${\mathcal Q}_c^t$, and call a random variable $\xi\in L_2(\Omega, \sigma(Z), \P)$ stochastically differentiable if there exists the mean square limit
$$
\hat \dif \xi=\lim_{\eps\to 0}{{\mathcal Q}_\eps^t\xi-\xi\over \eps}.
$$
The $L_2(\Omega, \sigma(Z), \P)$-closure  of the operator $\hat \dif$  is called the \emph{stochastic derivative} and is denoted by $\dif$.  The
adjoint operator $\delta=\dif^*$ is called the \emph{divergence
operator} or the \emph{extended stochastic integral}. The operators $\dif, \delta$ are well defined under conditions \textbf{H1}, \textbf{H2} for every $t>0$ and $\rho$ specified above; see \cite{Kul_ivanenko}, Remark 3.

Clearly, the above construction depends on the choice of $t$ and $\rho$: to track this dependence we use the notation $\dif_{t,\rho}, \delta_{t, \rho}$ instead of $\dif, \delta$. In a slightly larger generality, literally the same construction can be made on the space $L_0(\Omega, \sigma(Z, U), \P)$ of the functionals of the pair of processes $Z$ and $U$, with the trajectories of $U$ not being perturbed by ${\mathcal Q}_c^T$.
Then, analogously to the calculations made in \cite{Kul_ivanenko}, Sections 3.1, 3.2,  we have
$$
\dif_{t, \rho} Z_t=\int_0^{t}\int_{\Re}\varrho(u)\nu(d s,d u), \quad \dif_{t, \rho} U_t=0,
$$
$$\delta_{t, \rho}(1)=\int_0^t\int_{\Re}\chi_\rho(u)\tilde
\nu(d s,d u), \quad \chi_\rho(u)=-\varrho(u){m'(u)\over m(u)}-\varrho'(u);
$$
recall that $\nu$ and $\tilde \nu$ are, respectively, the Poisson point measure and the compensated Poisson measure
from the L\'evy-It\^o representation of $Z$.
Respectively,
$$
\dif_{t, \rho}X_t^\theta=\gamma \dif_{t, \rho} Z_t=\gamma \int_0^{t}\int_{\Re}\varrho(u)\nu(d s,d u).
$$
Furthermore, the second order stochastic derivative of $X^\theta_t$ is well defined:
$$\dif^2_{t, \rho}
X^\theta_t=\gamma\int_0^{t}\int_{\Re}\varrho(u)\varrho'(u)\,
\nu(ds,du).
$$

By Proposition \ref{prop1} and formula (\ref{pt}), the variable  $X_t^\theta$ has a distribution density $p_t(\theta; x)$ which is a $C^2$-function with respect to $\theta, x$. On the other hand, the Malliavin calculus developed above allows one to derive an integral representation for the ratio
$$
g_t(\theta;x,y)={\nabla_\theta p_t(\theta;x,y)\over p_t(\theta;x,y)}.
$$
Namely, repeating literally the proof of the assertion III of Theorem 1 in \cite{Kul_ivanenko}, we obtain the following representation:
\be\label{g}
g_t(\theta; x)=\begin{cases}\E^{t,\theta}_{0,x}\Xi^{\theta}_{t, \rho}, &  p_t(\theta; x)>0,\\
0,&\hbox{otherwise},\end{cases} \ee
where
\be\label{Xi}
\Xi^{\theta}_{t, \rho}:=\delta_{t, \rho}\left({\nabla_\theta X^{\theta}_t\over
\dif_{t, \rho} X^{\theta}_t}\right)={(\delta_{t, \rho}(1))(\nabla_\theta X^{\theta}_t)
\over \dif_{t, \rho} X^{\theta}_t}+{
(\dif^2_{t, \rho} X^{\theta}_t)(\nabla_\theta X^{\theta}_t)\over (\dif_{t, \rho}
X^{\theta}_t)^2}-{\dif_{t, \rho}(\nabla_\theta X^{\theta}_t)\over \dif_{t, \rho}
X^{\theta}_t}. \ee
Note that, formally, we can not apply Theorem 1 of \cite{Kul_ivanenko}  directly, because now we have an additional process $U$ which our target process $X^\theta$ depends on. Nevertheless, because  $U$ is not perturbed under the transformations ${\mathcal Q}_c^t$ which give the rise for the Malliavin calculus construction, it is easy to check that literally the same argument as the one used in the proof of Theorem 1  \cite{Kul_ivanenko} can be applied in the current (slightly extended) setting.

Recall that we already know $p_t(\theta;x,y)$ exists and is smooth with respect to $\theta, x,y$. We have
\begin{align}
\int_\Re f(y)\nabla_\theta p_t(\theta;x,y)\, dy
&=\nabla_\theta\E_x^\theta f(X_t^\theta)=\E_x^\theta f'(X_t^\theta)(\nabla_\theta X_t^\theta)
\nn\\
&=\E_x^\theta \dif_{t,\rho} f(X_t^\theta)\left({\prt_\theta X_t\over \dif_{t,\rho} X_t}\right)
=\E_x^\theta f(X_t^\theta)\Xi^1_{t,\rho}
\nn\\
&=\E_x^\theta f(X_t^\theta)g_t^\theta(x,X_t^\theta)  =\int_\Re f(y)  g_t(\theta;x,y) p_t(\theta;x,y)\, dy.
\nonumber
\end{align}
Hence the formula (\ref{g}) is actually equivalent to the following: for every compactly supported $f\in C^1(\Re)$,
\be\label{weakg}
\nabla_\theta \E f(X_t^\theta)=\E f(X_t^\theta) \Xi^{\theta}_{t,\rho},
\ee
and to prove  (\ref{modif_g}) it is sufficient to prove (\ref{weakg}) with the modified Malliavin weight $ \Xi^{\theta}_{t}$
instead of  $\Xi^{\theta}_{t,\rho}$. To do that, we exploit  an approximation procedure, hence we rewrite (\ref{weakg}) in an integral form, which is convenient for approximation purposes:
\be\label{to} \E
f\left(X^{\theta+v}_t\right)-\E
f\left(X^{\theta}_t\right)=\int_{0}^{1}\E
\left\{f\left(X^{\theta+sv}_t\right)(\Xi^{\theta+sv}_{t,\rho}, v)\right\}ds. \ee

 Because
$$
\prt_\beta X^\theta=t, \quad \prt_\gamma X^\theta=Z_t,
$$
we have
$$
\dif_{t, \rho}(\prt_\beta X^\theta)=0, \quad \dif_{t, \rho}(\prt_\gamma X^\theta)= \dif_{t, \rho}Z_t={1\over \gamma}\dif_{t, \rho}X_t^\theta,$$
therefore
$$\Xi^\theta_{t, \rho}=\left({t \delta_{t, \rho}(1)\over \dif_{t, \rho} X^{\theta}_t}+{t
\dif^2_{t, \rho} X^{\theta}_t \over (\dif_{t, \rho}
X^{\theta}_t)^2},{Z_t \delta_{t, \rho}(1)\over
\dif_{t, \rho} X^{\theta}_t}+{Z_t \dif^2_{t, \rho} X^{\theta}_t \over (\dif_{t, \rho}
X^{\theta}_t)^2}-{1\over \gamma}\right)^\top.$$

Now we proceed with the first approximation step as follows. Fix some $\rho_1\in C^2(\Re), \rho_1(u)\geq 0$ such that
$$
\rho_1(u)=\begin{cases} u^{2},&|u|\leq 1;\\
0,&|u|\geq 2,\end{cases}
$$
and define
$$
\rho_N(u)=N^2\rho_1(u/N).
$$
Observe that, for $t$ fixed and $N$ large enough,  $\rho_N(u)=u^2$ for $|u|\leq t^{1/\alpha}$, hence
$$\dif_{t, \rho_N}X^\theta_t\geq \gamma\kappa_t.$$
Next, there exists a constant $C$ such that
$$
\rho_N(u)\leq Cu^2, \quad |\rho_N'(u)|\leq C|u|,
$$
and therefore
$$
\left|{\chi_{\rho_N}(u)\over u}\right|\leq C(\tau(u)+1).
$$
Then, repeating literally the calculations from Section \ref{s32}, we can obtain a bound similar to (\ref{un_xi_bound}) for $t$ fixed, but a family of weights $\Xi_{t, \rho_N}^\theta,N\geq 1$ is considered instead:
\be\label{12}
\sup_{N\geq 1, \theta\in \tilde \Theta}\E\Big|\Xi_{t, \rho_N}^\theta\Big|^{2+\delta_1}<\infty, \quad \delta_1<\delta\wedge \delta'
\ee
(here $\delta$ comes from H2, and $\delta'$ comes from (\ref{add})). Hence the family $\{\Xi_{t, \rho_N}^\theta, N\geq 1, \theta\in \tilde \Theta\}$ is uniformly integrable. It is straightforward to see that
$$
\Xi_{t, \rho_N}^{\theta_N}\to \Xi_{t}^\theta, \quad N\to \infty
$$
with probability 1 for any sequence $\theta_N\to \theta\in \Theta$. Combined with the above uniform integrability, this shows that
$$
\Xi_{t, \rho_N}^{\theta}\to \Xi_{t}^\theta, \quad N\to \infty
$$
in $L_1(\Omega, \P)$ uniformly with respect to $\theta\in \tilde \Theta$. Hence we can pass to the limit in (\ref{to}) as $N\to \infty$ and get the required identity (\ref{weakg}) with  the modified Malliavin weight $\Xi_{t}^\theta$. This proves the representation (\ref{modif_g}) under the additional moment assumption (\ref{add}).

The second approximation step is aimed to remove the  assumption (\ref{add}), and is similar to the above one. Consider a family of processes $Z^L, L\geq 1$ with L\'evy measures
$$
\mu_L(du)=m_L(u)du, \quad m_L(u)=m(u)e^{-u^2/L};
$$
the $Z^{L}$-driven versions of the processes $X^\theta_t$ and $\Xi^\theta_t$ are also specified by the superscript $L$:
$X^{\theta, L}$, $\Xi^{\theta, L}$.
Because $|u|^{2+\delta}e^{-u^2/L}\leq C$,
every $\mu_L$ satisfies (\ref{add}). In addition, it is an easy calculation to show that conditions \textbf{H1}, \textbf{H2} are satisfied for
$\mu_L$ uniformly with respect to $L\geq 1$. Hence we have the following:
\begin{itemize}
  \item[(a)] for every $L\geq 1$, (\ref{to}) holds true with $X^\theta_t$ and $\Xi^\theta_t$ replaced by $X^{\theta, L}_t$ and $\Xi^{\theta, L}_t$,
  respectively;
  \item[(b)] for every $t\in (0,1]$, the family $\Xi^{\theta, L}_t, \theta\in \tilde\Theta, L\geq 1$ satisfies an analogue of (\ref{12}) uniformly with respect to $L\geq 1$ (to prove this, one should repeat literally the calculations from Section \ref{s32}).
\end{itemize}

Finally, it is straightforward to see that $(X^{\theta, L}_t, \Xi^{\theta, L}_t)$ weakly converge to $(X^{\theta}_t, \Xi^{\theta}_t)$ as $L\to \infty$. Since the family $\{\Xi^{\theta, L}_t\}$ is uniformly integrable by the above property (b), we can pass to the limit in the relation (\ref{to}) for  $X^{\theta, L}_t, \Xi^{\theta, L}_t$, and get finally (\ref{to}) for  $X^{\theta}_t, \Xi^{\theta}_t$. This proves (\ref{modif_g}) and completes the proof of Theorem \ref{thm3}.\qed


\appendix
\section{Proof of Proposition \ref{prop1}}

(1) Because $U$ is negligible (see (\ref{neglig})), we can restrict our considerations to the variables
$$
\zeta_{\alpha,t}=t^{-1/\alpha}(Z_t+c_t).
$$
Their characteristic functions have the form
$\E e^{i\lambda \zeta_{\alpha,t}}=e^{\psi_{\alpha,t}(\lambda)}$, where
$$\ba
\psi_{\alpha,t}(\lambda)&=t\int_{\Re}\left(e^{i\lambda t^{-1/\alpha }u}-1-i\lambda t^{-1/\alpha } u1_{|u|\leq 1}\right)\, \mu(du)+i\lambda c_tt^{-1/\alpha}
\\&=t\int_{\Re}\left(e^{i\lambda t^{-1/\alpha }u}-1-i\lambda t^{-1/\alpha } u1_{|u|\leq t^{1/\alpha}}\right)\, \mu(du);
\ea
$$
in the last identity we have used the formula (\ref{c_t}) for $c_t$. Changing the variable $v=ut^{-1/\alpha}$, we get
$$
\psi_{\alpha,t}(\lambda)=\int_{\Re}\left(e^{i\lambda v}-1-i\lambda v1_{|v|\leq 1}\right)\, \mu_t(dv),
$$
where
$
\mu_t(dv)
$
has the density
$$
m_t(v)=t^{1+1/\alpha} m(t^{1/\alpha} v).
$$
By \textbf{H1}, for every $\eps>0$ there exists $u_\eps>0$ such that
$$
(1-\eps)m_{\alpha, C_\pm}(v)\leq m_t(v)\leq (1+\eps)m_{\alpha, C_\pm}(v), \quad |v|\leq t^{-1/\alpha} u_\eps.
$$
On the other hand, the term $\left(e^{i\lambda v}-1-i\lambda v1_{|v|\leq 1}\right)$ is bounded, and
$$
\mu_t\Big(\{v: |v|>t^{-1/\alpha} u_\eps\}\Big)=t\mu\Big(\{u: |u|> u_\eps\}\Big)\to 0, \quad t\to 0.
$$
Using that, one can easily derive
$$
\psi_{\alpha,t}(\lambda)\to \psi_{\alpha,C_\pm}(\lambda):=\int_{\Re}\left(e^{i\lambda v}-1-i\lambda v1_{|v|\leq 1}\right)m_{\alpha,C_\pm}(v)\, dv,\quad t\to 0.
$$
Because the characteristic function of $Z_1^{\alpha,C_\pm}$ equals $e^{\psi_{\alpha,C_\pm}(\lambda)}$, this completes the proof.

\medskip

(2) Consider first the case
 $U\equiv 0$; now $\phi_{\alpha,t}$ does not depend on $\theta$, and we omit $\theta$ in the notation. We would like to apply the inverse Fourier transform representation for $\phi_{\alpha,t}$ and its derivatives:
$$
\phi_{\alpha,t}(x)={1\over 2\pi}\int_{\Re}e^{-i\lambda x+\psi_{\alpha,t}(\lambda)}\, d\lambda,
$$
$$
(\prt_x)^k\phi_{\alpha,t}(x)={1\over 2\pi}\int_{\Re}(-i\lambda)^ke^{-i\lambda x+\psi_{\alpha,t}(\lambda)}\, d\lambda,
$$
To do that,  we have to verify that the functions under the integrals are absolutely integrable. We have
$$
|e^{-i\lambda x+\psi_{\alpha,t}(\lambda)}|\leq e^{\mathrm{Re}\,\psi_{\alpha,t}(\lambda)},
$$
$$\mathrm{Re}\,\psi_{\alpha,t}(\lambda)=t\int_{\Re}(\cos (t^{-1/\alpha}\lambda u)-1)\mu(du)\leq t\int_{t^{-1/\alpha}|\lambda u|<1}(\cos (t^{-1/\alpha}\lambda u)-1)\mu(du).
$$
Then by \textbf{H1} there exist $c_1, c_2>0$ such that
$$
|e^{-ix\lambda+\psi_{\alpha,t}(\lambda)}|\leq c_1e^{-c_2|\lambda|^\alpha}, \quad x,\lambda\in \Re, \quad t\in (0,1].
$$
This proves  existence of $\phi_{\alpha,t}(x)$ and all  its derivatives. Moreover, we have
$$
\sup_{x\in \Re, t\in (0,1]}|\phi_{\alpha,t}'(x)|<\infty, \quad \sup_{x\in \Re, t\in (0,1]}|\phi_{\alpha,t}''(x)|<\infty.
$$

Now we come back to the case of non-zero $U$. Because the law of $\zeta_{\alpha,t}^\theta$ is a convolution of the laws of $\zeta_{\alpha,t}$ and $\gamma^{-1}U_t$, the above bound can be extended:
\be\label{bound}
\sup_{\theta\in \tilde \Theta}\sup_{x\in \Re, t\in (0,1]}|\phi_{\alpha,t}'(\theta;x)|<\infty, \quad \sup_{\theta\in \tilde \Theta}\sup_{x\in \Re, t\in (0,1]}|\phi_{\alpha,t}''(\theta;x)|<\infty.
\ee
In addition, $\zeta_{\alpha,t}^\theta\Rightarrow Z_1^{\alpha, C_\pm}$ uniformly in $\theta\in \tilde \Theta, U\in \mathfrak{U}$: this follows from the statement (1) and the fact that $\gamma^{-1}t^{-1/\alpha}U_t$ is uniformly negligible. This convergence and the first (resp. second) bound in (\ref{bound}) provide the first (resp. second) convergence in (\ref{conv}).
\qed



\bigskip
\subsection*{Acknowledgement} 
The authors are grateful to the anonymous referees for their valuable comments, which led to substantial improvement of the earlier version.



\begin{thebibliography}{1}

\bibitem{AJ07} A{\"{\i}}t-Sahalia, Y. and Jacod, J. (2007), Volatility estimators for discretely sampled L\'{e}vy processes. Ann. Statist. 35 (2007), no. 1, 355--392.

\bibitem{AJ08} A\"{\i}t-Sahalia, Y. and Jacod, J. (2008), Fisher's information for discretely sampled L\'evy processes. Econometrica 76, 727--761.

\bibitem{BasSco83} Basawa, I.~V. and Scott, D.~J. (1983), Asymptotic optimal inference for nonergodic models. Lecture Notes in Statistics, 17. Springer-Verlag, New York-Berlin.

\bibitem{Bichteler_Grav_Jacod} Bichteler, K., Gravereaux, J.~B. and Jacod, J.
\textit{Malliavin calculus for processes with jumps}.
Gordon and Breach science publishers, N.Y., London, Paris, Tokyo, 1987.

\bibitem{Bismut_jumps} Bismut, J.~M. Calcul des variations stochastique
et processus de sauts. Z. Warw. theor. verw. Geb., 56(4):469-505, 1981.


\bibitem{bridges} Chaumont, L. and Uribe Bravo, G.
Markovian bridges: Weak continuity and pathwise constructions.
\textit{Ann. Probab.}, 39(2):609-647, 2011.

\bibitem{CG15} Cl\'{e}ment, E. and Gloter, A. (2015), Local Asymptotic Mixed Normality property for discretely observed stochastic differential equations driven by stable L\'{e}vy processes. Stochastic Proc. Appl. 125, 2316--2352.

\bibitem{Corc}
Corcuera, J.~M. and Kohatsu-Higa, A. Statistical inference and
Malliavin calculus. \textit{Seminar on Stochastic Analysis, Random
Fields and Applications VI, Progress in Probability, Springer
Basel}, 63:59-82, 2011.

\bibitem{Doh87} Dohnal, G (1987), On estimating the diffusion coefficient. J. Appl. Probab. 24, 105--114.

\bibitem{Fah88} Fahrmeir, L. (1988) A note on asymptotic testing theory for nonhomogeneous observations.
Stochastic Process. Appl. 28, 267--273.

\bibitem{GJ93} Genon-Catalot, V. and Jacod, J. (1993), On the estimation of the diffusion coefficient for multi-dimensional diffusion processes. Ann. Inst. H. Poincar\'{e} Probab. Statist. 29, 119--151.

\bibitem{GJ94} Genon-Catalot, V. and Jacod, J. (1994), Estimation of the diffusion coefficient for diffusion processes: random sampling. Scand. J. Statist. 21, 193--221.

\bibitem{Gobe} Gobet, E. (2001), Local asymptotic mixed normality property for elliptic diffusion: a Malliavin calculus approach. Bernoulli 7, 899 -- 912.

\bibitem{Gob02} Gobet, E. (2001), LAN property for ergodic diffusions with discrete observations. Ann. Inst. H. Poincar\'{e} Probab. Statist. 38, 711--737.

\bibitem{IKh} Ibragimov, I.~A. and Hasminskii, R.~Z. (1981), \textit{Statistical estimation: asymptotic theory}. New York, Springer-Verlag.


\bibitem{Kes97} Kessler, M. (1997), Estimation of an ergodic diffusion from discrete observations. Scand. J. Statist. 24, 211--229.

\bibitem{Kul_ivanenko_2} Ivanenko, D.~O. and Kulik, A.~M. (2014), LAN property for families of distributions of solutions to L\'{e}vy driven SDEs.
 Modern Stochastics: Theory and Applications 1, 33--47.

\bibitem{Kul_ivanenko} Ivanenko, D.~O. and Kulik, A.~M. (2015), Malliavin calculus approach to statistical inference for L\'{e}vy driven SDEs. Methodology and Computing in Applied Probability 17, 107--123.


\bibitem{Jeg82} Jeganathan, P. (1982), On the asymptotic theory of estimation when the limit of the log-likelihood ratios is mixed normal. Sankhy\`{a} Ser. A 44, 173--212.

\bibitem{LeC60} Le Cam, L. (1960), Locally asymptotically normal families of distributions. Certain approximations to families of distributions and their use in the theory of estimation and testing hypotheses. Univ. california Publ. Statist. 3, 37--98.

\bibitem{LeCYan00} Le Cam, L. and Yang, G.~L. (2000), Asymptotics in statistics. Some basic concepts. Second edition. Springer Series in Statistics. Springer-Verlag, New York.

\bibitem{Mai14} Mai, H. (2014), Efficient maximum likelihood estimation for L\'{e}vy-driven Ornstein-Uhlenbeck processes. Bernoulli 20, 919--957.

\bibitem{Mas09} Masuda, H. (2009), Joint estimation of discretely observed stable L\'{e}vy processes with symmetric L\'{e}vy density. J. Japan Statist. Soc. 39, 49--75.

\bibitem{Mas_LM} Masuda, H. Parametric estimation of {\lp es}. L\`{e}vy Matters IV,
Estimation for Discretely Observed L\'{e}vy Processes, pp.179--286, Lecture Notes in Mathematics, Vol. 2128, Springer.

\bibitem{Sat99} Sato, K. (1999) {\it L{\'e}vy processes and infinitely divisible distributions.} Cambridge university press.

\bibitem{Swe92} Sweeting, Trevor J. (1992) Asymptotic ancillarity and conditional inference for stochastic processes.
Ann. Statist. 20, 580--589.

\bibitem{Zolot}  Zololarev, V.~M. (1986) One-dimensional stable distributions. Transl. Math. Monographs, 65, AMS, Providence.







\end{thebibliography}
\end{document}